\theoremstyle{plain}
\newtheorem{thm}[subsection]{Theorem}
\newtheorem{lem}[subsection]{Lemma}
\newtheorem{conj}{Conjecture}
\theoremstyle{definition}
\newtheorem{defn}[subsection]{Definition}
\newcommand{\card}[1]{\ensuremath{\left |#1\right |}}
\newcommand{\set}[1]{\ensuremath{\left \{ #1 \right \} } }
\def\v{\vee}
\def\w{\wedge}
\def\L{\mathcal{L}}
\def\a{\alpha}
\def\b{\beta}
\def\d{\delta}
\def\g{\gamma}
\def\v{\vee}
\def\w{\wedge}
\def\L{\mathcal{L}}
\def\a{\alpha}
\def\b{\beta}
\def\d{\delta}
\def\g{\gamma}
\title{On the number of non-comparable pairs of elements in a distributive lattice }
\author{
        Himadri Mukherjee\\
                Department of Mathematics and Statistics\\
        Indian Institute of Science Education and Research, Kolkata\\
        himadri@iiserkol.ac.in
       }
\date{\today}
\begin{document}

\maketitle

\begin{abstract}
In this article we introduce the study of the number of pairs of non-comparable elements in a distributive lattice $\L$ as an invariant of the lattice. We give several tight lower and upper bounds for the number. Using the bounds we describe the lattices precisely for which the algebraic varieties associated are complete intersections.
\end{abstract}

\section{Introduction}
\noindent
Motivated by the work of Lakshmibai and Gonciulea in \cite{glnext,g-l} we look at the number of non-comparable elements in a distributive lattice. In the paper \cite{glnext} the authors used the number of a special types of non-comparable elements in the Young lattice $I_{d,n}$ to find the singular locus of the Schubert variety $X_{d,n}$. In the work of Lakshmibai, Brown in \cite{b-l, GH} and in the work of the present author in \cite{HL, thesis} similar techniques of counting certain types of non-comparable pairs of elements is used to write the singular locus of the algebraic space associated to the lattice.
The non-comparable elements $\theta,\d$ give rise to the sublattice $\set{\theta, \delta , \theta \v \delta, \theta \w \delta}$ which are called diamonds in the literature \cite{GLdef,glnext,HL,b-l}. These are closely related to the rank three embedded sublattice of the lattice $\L$. Where a sublattice $D \subset \L$ is called embedded if whenever $\a \v \b $ and $\a \w \b$ is in the lattice $D$ we have $\a , \b \in D$. Where rank is defined as the length of a maximal chain in the lattice. Counting occurrences of special types of sublattices is a recurrent theme in combinatorics. In the articles \cite{hexa, Subgroup, Dedikind} the problems of counting specific number of sublattices are studied. In various streams of mathematics namely in geometry and algebra these kinds of problems arise out naturally.

In the present article we introduce the problem of counting the number of non-comparable elements in a distributive lattice $\L$, which we denote by $n(\L)=1/2(\card{\set{\theta, \delta \in \L| \theta \nsim \delta }})$, where $\theta \nsim \delta$ denote that $\theta $ and $\delta $ are non-comparable to each other. The problem in full generality is to find a closed formula for the number $n(\L)$ for a lattice $\L$. Here we take up the problem partially by giving a set of lower and upper bounds for finite distributive lattices. Further as an application we look at the number of equations needed to define a variety which is an interesting numerical invariant of an algebraic space. We prove that the minimum number of defining equations for the algebraic variety associated to the lattice $\L$ is precisely the number of diamonds in a distributive lattice $\L$, which we denoted by $n(\L)$, is an interesting combinatorial invariant of the lattice. We show that $n(\L)$ is the minimal number of generators needed to define the variety $X(\L)$ in $\mathbb{A}^{\card{\L}}$. As an application we use the bounds developed in the article to find the necessary and sufficient conditions on lattices $\L$ for which the affine varieties $X(\L)$ are complete intersections, see \cite[p.~14]{Ha} for a definition of complete intersection. We have also added a conjectural lower bound of $n(\L)$ in the paper. For an introduction to the study of these varieties the articles \cite{HL,b-l,GLdef} and the book \cite{lakhi} are suggested.

A general introduction to distributive lattice theory can be obtained from the classics of the trade such as \cite{GG, Birkhoff}. A thorough geometric significance of these lattice theoretic structures can be obtained in \cite{Hibi, lakhi}.  From the point of view of a geometer one studies the ideal $I(\L)= \langle x_\a x_\b - x_{\a\v \b}x_{\a \w \b} | \a, \b \in \L \rangle $ in the polynomial algebra $k[\L]=k[x_\a | \a \in \L]$ and the vanishing locus of the said ideal in the affine space $\mathbb{A}^{\card{\L}}$. In the article \cite{Hibi} it was shown by Hibi that this binomial ideal is prime if and only if $\L$ is distributive, furthermore the $k$-algebra $k[\L]/I(\L)$ is normal when $\L$ is distributive. The vanishing locus of these binomial ideals in the affine space $\mathbb{A}^{\card{\L}}$ is a normal toric variety see the celebrated book by Sturmfels \cite{sturmfels,ES} for detail. The authors in \cite{glnext} related these toric varieties to a degeneration of Schubert varieties in $G/P_d$ by using a Grobner basis technique. Further they also calculated the cone and the faces of the toric varieties associated to $\L$. In the same article it was also shown that the dimension of the toric variety is equal to the rank of the distributive lattice. In the article \cite{wagner} Wagner relates the singular locus of these varieties to the poset of join irreducible $J$ of the lattice $\L$ using interesting lattice theoretic tools. The article \cite{HL} studied the singular locus of these varieties using a standard monomial theoretic approach they also find a standard monomial basis for the tangent cone in \cite{thesis}.

This present article is organised in four sections, apart from the introduction we have a separate section for all the main results of the paper in one place with references to appropriate lemmas and theorems for the proof. The next two sections are named as "Lower Bound" and "Upper Bound". In the section named "Lower Bound" we developed all the constructions and proved all the essential lemmas required to derive the lower bounds. Similarly we have used the section "Upper Bound" to prove all the lemmas required to derive the upper bounds. We have also included the proofs of the main theorems in either of these two sections depending on whether it is an upper bound or a lower bound.

\section{Main Results}

\begin{thm}For a distributive lattice $\L$ we have $n(\L) \geq \card{L} - \card{J}$.
\end{thm}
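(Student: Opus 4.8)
The plan is to produce an explicit injection from a distinguished set of lattice elements into the set of non-comparable (equivalently, diamond-generating) pairs, and then to count the domain. Since $n(\L)$ is exactly the number of such pairs, any injection whose domain has $\card\L-\card J$ elements immediately yields $n(\L)\ge \card\L-\card J$. The right domain to use is the set of \emph{join-reducible} elements, those $z\in\L$ possessing at least two distinct lower covers.

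First I would invoke Birkhoff's description of $\L$ as the lattice of order ideals of its poset $J$ of join-irreducibles, recalling that an element is join-irreducible precisely when it covers exactly one element, while the minimum $\hat 0$ covers none. Partitioning $\L$ according to the number of lower covers therefore separates it into the join-irreducibles (one lower cover, counted by $\card J$), the bottom element, and the join-reducibles (at least two lower covers), which is the class I wish to map.

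Next, to each join-reducible $z$ I would assign a chosen pair of distinct lower covers $a_z\lessdot z$ and $b_z\lessdot z$ and set $\Phi(z)=\{a_z,b_z\}$. These two elements are non-comparable, and since $a_z\lessdot z$ forces $a_z\v b_z=z$ (as $a_z<a_z\v b_z\le z$ leaves no room), the pair $\{a_z,b_z\}$ sits at the base of the diamond $\{a_z,b_z,z,a_z\w b_z\}$. The map $\Phi$ is injective because $z$ is recovered from its image as $a_z\v b_z$, so distinct join-reducibles yield distinct pairs; this is the conceptual core, and it is essentially immediate.

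The step I expect to carry the real difficulty is the enumeration that pins the size of the domain to the stated constant $\card\L-\card J$: one must account precisely for the minimum element and confirm that the elements with a unique lower cover are exactly the $\card J$ join-irreducibles, so that what remains matches the right-hand side of the inequality rather than falling one short. I would verify this count carefully against small cases (a chain, where $n(\L)=0$, and the Boolean lattice $B_2$, where $n(\L)=1$), and I would cross-check the whole bound against the independent observation that distinct elements of equal rank are automatically non-comparable, which furnishes $\sum_i\binom{W_i}{2}$ as an alternative lower bound for $n(\L)$.
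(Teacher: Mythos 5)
Your injection is essentially the paper's own proof of Theorem \ref{lower bound}: the paper likewise maps each $\theta \in \L \setminus J$ to a chosen non-comparable pair $(\a_1,\a_2)$ with $\a_1 \v \a_2 = \theta$, recovering injectivity exactly as you do (your choice of two lower covers is simply one concrete way of selecting such a pair, and makes non-comparability and the join condition transparent). The off-by-one you rightly flag dissolves once you adopt the paper's implicit convention that $\hat{0} \in J$ (forced by the small cases you propose checking: a chain has $n(\L)=0$, which matches $\card{\L}-\card{J}$ only if $\hat{0}$ is counted among the join-irreducibles, and likewise the diamond with $n(\L)=1$), under which your domain of join-reducibles has exactly $\card{\L}-\card{J}$ elements and your argument is complete.
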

 We reduce the estimation of the number $n(\L)$ to the estimation of $f(\d)$ by a simple formula derived in the lemma \ref{equation}.  As a consequence of this result we become interested to know when the bound is actually achieved. In the following result we exactly identify the lattices for which we have the equality. In that aim we also develop a concatenation move in a distributive lattice in \ref{decomposition}. See the theorem \ref{lower bound} for the complete proof.

\begin{thm}For a distributive lattice $\L$ we have $n(\L)= \card{\L} - \card{J}$ if and only if the lattice $\L$ is isomorphic to a concatenation of copies of a diamond and chains of arbitrary lengths.
\end{thm}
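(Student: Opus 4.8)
The plan is to recast the desired equality as a statement about how often an element occurs as a join of an incomparable pair, and then to take the lattice apart one block at a time from the top. For $\d \in \L$ let $f(\d)$ denote the number of incomparable pairs $\set{\a,\b}$ with $\a \v \b = \d$, as in Lemma \ref{equation}, so that $n(\L) = \sum_{\d} f(\d)$. The quantity $\card{\L}-\card{J}$ counts precisely the join-reducible elements, i.e. those $\d$ with $f(\d)\ge 1$: two distinct lower covers of such a $\d$ are incomparable and join to $\d$, while a join-irreducible element or $\hat 0$ has $f(\d)=0$. Thus the assignment $\set{\a,\b}\mapsto \a\v\b$ is onto the join-reducible elements, which is exactly the inequality $n(\L)\ge \card{\L}-\card{J}$ of the preceding theorem, and $n(\L)-(\card{\L}-\card{J})=\sum_{\d:f(\d)\ge1}(f(\d)-1)\ge 0$. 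Hence equality holds if and only if $f(\d)\le 1$ for every $\d$; call this condition (E): every $\d\in\L$ is the join of at most one incomparable pair. Note that (E) forces every element to have at most two lower covers, since $r$ lower covers already produce $\binom{r}{2}$ distinct incomparable pairs joining to that element.

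First I would dispatch the easy implication. If $\L=B_1\oplus\cdots\oplus B_k$ is a concatenation (ordinal sum, in the sense of \ref{decomposition}) of diamonds and chains, then any two elements lying in distinct blocks are comparable, a chain block contains no incomparable pair, and a diamond block $\set{\te\w\d,\te,\d,\te\v\d}$ contains the single incomparable pair $\set{\te,\d}$, whose join is the top of that block. Distinct blocks have distinct tops, so each element is the join of at most one incomparable pair and (E) holds; in fact $n(\L)$ equals the number of diamond blocks, which is exactly the number of join-reducible elements, so equality holds on the nose.

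The substance is the converse, which I would prove by induction on $\card{\L}$, peeling a single block off the top. Assume (E), and look at the maximum $\hat 1$, which has at most two lower covers. If $\hat 1$ has a unique lower cover $y$, then every element $<\hat 1$ lies below $y$ (follow a maximal chain up to $\hat 1$), so $\L=[\hat 0,y]\oplus\mathbf 2$ is obtained from the interval $[\hat 0,y]$ by stacking one edge; since $[\hat 0,y]$ is closed under $\v$ and $\w$ it inherits (E), and induction applies. If instead $\hat 1$ has exactly two lower covers $y_1,y_2$, set $m=y_1\w y_2$; by modularity (distributivity) $\set{m,y_1,y_2,\hat 1}$ is a diamond. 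The crucial claim is that every $z<\hat 1$ with $z\notin\set{y_1,y_2}$ satisfies $z\le m$: otherwise, say $z\le y_1$ but $z\not\le y_2$, and then $\set{z,y_2}$ is an incomparable pair with $z\v y_2=\hat 1$ (because $y_2$ is covered by $\hat 1$), so (E) forces $\set{z,y_2}=\set{y_1,y_2}$, giving $z=y_1$, a contradiction. Therefore $\L=[\hat 0,m]\oplus\set{m,y_1,y_2,\hat 1}$, the interval $[\hat 0,m]$ again satisfies (E), and induction completes the decomposition.

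The main obstacle is exactly this localization step in the two–lower–cover case: ruling out the ``lateral'' elements that sit below $\hat 1$ but not below $m$. Everything depends on (E) being strong enough to collapse such an element into one of $y_1,y_2$, and the identity $z\v y_2=\hat 1$ must be justified carefully from the covering relation $y_2\,$–$\,\hat 1$; one must also verify that (E) is genuinely inherited by the intervals $[\hat 0,y]$ and $[\hat 0,m]$, so that the inductive hypothesis is legitimate. Once these points are secured, grouping the successive edge–blocks produced in the single–lower–cover case into maximal chains yields the stated description of $\L$ as a concatenation of copies of a diamond and chains of arbitrary length.
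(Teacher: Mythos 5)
Your proposal is correct, but it takes a genuinely different route from the paper's. The paper first reduces to \emph{thick} lattices via the concatenation decomposition (Lemma \ref{decomposition}) together with the additivity $n(\L_1 \# \L_2)=n(\L_1)+n(\L_2)$, and then the whole weight rests on Lemma \ref{forward}: a thick lattice that is not a diamond satisfies the strict inequality $n(\L)>\card{\L}-\card{J}$, proved by induction using the pruned sublattice $\L_\a$ obtained by deleting the filter above a maximal join irreducible. You instead fiber the incomparable pairs over their joins: writing $g(\d)$ for the number of incomparable pairs with join $\d$ (your $f$ clashes with the paper's $f(\d)=\card{\Lambda_\d}$, and Lemma \ref{equation} concerns that comparability count, not your fiber count, so the citation is inapt even though your identity $n(\L)=\sum_\d g(\d)$ is immediate), you obtain $n(\L)-(\card{\L}-\card{J})=\sum_{\d\,:\,g(\d)\ge 1}(g(\d)-1)$, so equality becomes the local condition (E) that no element is the join of two distinct incomparable pairs, and you then peel blocks off the top by analyzing the at most two lower covers of $\hat{1}$. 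Your argument is more elementary and self-contained: it reproves the lower bound of Theorem \ref{lower bound} en route, never needs thickness, pruning, or the additivity of $n$ under concatenation, and it constructs the chain/diamond blocks explicitly. What the paper's route buys is the stronger standalone statement of Lemma \ref{forward} (strict inequality for every thick non-diamond) and reuse of the pruned-lattice machinery developed for other results; what yours buys is a transparent exact formula for the defect $n(\L)-(\card{\L}-\card{J})$.

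Three small points to tighten. First, your identification of $\card{\L}-\card{J}$ with the number of elements having $g(\d)\ge 1$ requires the convention that the minimal element counts as join irreducible (equivalently, $\card{J}$ is the number of vertices of a maximal chain); the paper's stated bounds force this convention, since a chain must have $n(\L)=0=\card{\L}-\card{J}$. Second, in the two-cover case you should state explicitly that every $z<\hat{1}$ lies below $y_1$ or $y_2$ (the penultimate element of a maximal chain from $z$ to $\hat{1}$ is a lower cover of $\hat{1}$) before the dichotomy ``say $z\le y_1$ but $z\not\le y_2$''; with that in place, $z\not\le y_2$ and $y_2\not\le z$ give incomparability, and $y_2\lessdot\hat{1}$ gives $z\v y_2=\hat{1}$, exactly as you intend. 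Third, distributivity is not actually needed to see that $\set{m,y_1,y_2,\hat{1}}$ is a diamond: the two lower covers of $\hat{1}$ are automatically incomparable with join $\hat{1}$, and $m=y_1\w y_2$ closes the sublattice. None of these affects the validity of the argument.
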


We first simplified the lattice in question to a thick lattice see \ref{NS} for the definition. Then using the result for the thick lattices we prove the general case that the bound is achieved in the case of concatenations of chains.

Recall that an algebraic variety $V(f_1,f_2,f_3, \ldots, f_n)$ cut out by the set of polynomials $f_1,f_2,\ldots,f_n \in k[x_1,x_2,\ldots x_m]$ is called a complete intersection if the codimension of the variety as a subspace of $\mathbb{A}^m$ is exactly the number of the polynomials i,e equal to $n$. See \cite{Ha,eis} for a detailed introduction of algebraic varieties and the theory of complete intersections. As a result of the previous main theorem we immediately write down the only distributive lattice varieties that are complete intersections. See the theorem \ref{equality} for the complete proof.

\begin{thm} The affine variety $X(\L)$ is a complete intersection if and only if the lattice $\L$ is a concatenation of diamonds and chains.
\end{thm}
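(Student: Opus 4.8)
The plan is to read this statement as a corollary that fuses the geometric notion of complete intersection with the combinatorial equality analysed in the preceding theorem. The strategy is to translate ``$X(\L)$ is a complete intersection'' into the single numerical condition $n(\L)=\card{\L}-\card{J}$, at which point the classification of lattices attaining the lower bound finishes the job. Concretely, I would show two things: that the codimension of $X(\L)$ in $\mathbb{A}^{\card{\L}}$ equals $\card{\L}-\card{J}$, and that the minimal number of equations defining $X(\L)$ equals $n(\L)$; the definition of complete intersection then forces equality of these two numbers.

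First I would pin down the codimension. Since $\L$ is distributive, Hibi's theorem guarantees that $I(\L)$ is prime, so $X(\L)$ is irreducible with coordinate ring $k[\L]/I(\L)$ and vanishing ideal exactly $I(\L)$; in particular the minimal number of generators of the defining ideal is a well-defined invariant, and it was identified in the introduction with $n(\L)$. By the result of Gonciulea and Lakshmibai quoted above, $\dim X(\L)$ equals the rank of $\L$. Invoking Birkhoff's representation theorem, $\L$ is the lattice of order ideals of its poset of join-irreducibles $J$, and a maximal chain is obtained by adjoining the elements of $J$ one at a time, so $\operatorname{rank}(\L)=\card{J}$. Hence
\[
\operatorname{codim}_{\mathbb{A}^{\card{\L}}} X(\L)=\card{\L}-\dim X(\L)=\card{\L}-\card{J}.
\]
By Krull's height theorem the minimal number of generators of $I(\L)$ is always at least this codimension, which is precisely the content of the lower bound $n(\L)\geq\card{\L}-\card{J}$; thus $X(\L)$ is a complete intersection exactly when this automatic inequality is sharp.

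It remains to observe that sharpness is the equality $n(\L)=\card{\L}-\card{J}$, and the preceding theorem characterizes this equality as $\L$ being isomorphic to a concatenation of copies of a diamond and chains of arbitrary lengths; the stated equivalence follows at once. The hard part here is not the assembly but the two borrowed inputs and the soundness of the algebraic-geometric dictionary: I must be sure that $I(\L)$ is genuinely the full radical ideal of $X(\L)$ (so that ``minimal number of generators'' is the right quantity and the ideal-theoretic notion of complete intersection is the one in play), and that $\operatorname{rank}(\L)$, $\dim X(\L)$, and $\card{J}$ all coincide. Once the nontrivial identification of the minimal number of generators with $n(\L)$ and the equality characterization of the preceding theorem are granted, the complete-intersection statement is a direct translation of the combinatorial equality $n(\L)=\card{\L}-\card{J}$ into geometric language.
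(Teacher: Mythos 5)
Your proposal is correct and follows essentially the same route as the paper: the paper's proof likewise reduces the complete-intersection condition to the equality $n(\L)=\card{\L}-\card{J}$, using that $\dim X(\L)=\card{J}$ and that $n(\L)$ is the minimal number of generators of $I(\L)$, and then invokes the equality characterization (Theorem \ref{equality}). You merely spell out the inputs the paper leaves implicit (Hibi's primality of $I(\L)$, rank $=\card{J}$ via Birkhoff, and the Krull height inequality), which is a sound elaboration rather than a different argument.
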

at this juncture it is a natural question to ask for an upper bound of the number $n(\L)$ the question, which we deal with in the next theorem. In the next theorem we observe that $f(\d) \geq \card{J}$ see the lemma \ref{approx f(d)} which gives us an upper bound for the number $n(\L)$. See theorem \ref{complete intersection} for the complete proof.

\begin{thm}$n(\mathcal{L}) \leq (\card{\mathcal{L}}-\card{J})\card{\mathcal{L}}/2$.
\end{thm}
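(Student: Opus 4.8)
The plan is to obtain the bound as an immediate consequence of the two facts already recorded about $f(\d)$: the counting identity of Lemma \ref{equation} and the pointwise estimate of Lemma \ref{approx f(d)}. Recall that for a fixed $\d \in \L$ the quantity $f(\d)$ counts the elements of $\L$ comparable to $\d$ (with $\d$ itself included), so that exactly $\card{\L}-f(\d)$ elements are non-comparable to $\d$. Summing this count over all $\d \in \L$ and halving to account for the fact that every unordered non-comparable pair is counted twice yields
\[
n(\L) \;=\; \frac{1}{2}\sum_{\d \in \L}\bigl(\card{\L}-f(\d)\bigr),
\]
which is the simple formula of Lemma \ref{equation}. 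This reduces the estimate for $n(\L)$ to a uniform lower bound for $f(\d)$.

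Next I would invoke Lemma \ref{approx f(d)}, which gives $f(\d) \geq \card{J}$ for every $\d \in \L$. Intuitively this holds because $\L$ is graded: every element $\d$ lies on a maximal chain, each maximal chain consists of $\card{J}$ pairwise comparable elements, and all of these are in particular comparable to $\d$; hence $\d$ has at least $\card{J}$ comparable elements. Since the lemma is already available I may use the inequality directly, without reproving it here.

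Feeding $f(\d) \geq \card{J}$ termwise into the identity, each of the $\card{\L}$ summands satisfies $\card{\L}-f(\d) \leq \card{\L}-\card{J}$, and therefore
\[
n(\L) \;\leq\; \frac{1}{2}\sum_{\d \in \L}\bigl(\card{\L}-\card{J}\bigr) \;=\; \frac{(\card{\L}-\card{J})\,\card{\L}}{2},
\]
which is exactly the claimed bound. At the level of this theorem there is no genuine obstacle: once the identity and the estimate $f(\d) \geq \card{J}$ are in hand, the conclusion is a one-line substitution. The real content lies in Lemma \ref{approx f(d)}, and the only point that needs care there is the uniformity of the chain estimate, namely that every element of $\L$ --- and not merely a generic one --- extends to a maximal chain of the full size $\card{J}$; this is guaranteed by the gradedness of a finite distributive lattice. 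I would finally note that the bound is far from tight in general, since $f(\d)$ equals $\card{J}$ only for elements all of whose comparabilities arise from a single maximal chain, while for most elements $f(\d)$ is strictly larger.
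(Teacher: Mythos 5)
Your proposal is correct and follows essentially the same route as the paper: it combines the counting identity of Lemma \ref{equation} with the pointwise bound $f(\d) \geq \card{J}$ from Lemma \ref{approx f(d)} and substitutes, exactly as in Theorem \ref{first approximation}. Your additional remarks on why $f(\d) \geq \card{J}$ holds and on the looseness of the bound are consistent with the paper's Lemma \ref{approx f(d)} and add nothing that diverges from its argument.
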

See  theorem \ref{first approximation} for the complete proof.
%%%%%%%%%%%%%%%%%%%%%%%%%%%%%%%%%%%%%%%%%%%%%%%%%%%%%%%%%%%%%%%%%%%%%%%%%%%%%%%%%%%%%%%%%%%%%%%%%%%%%%%%%%%%%%%%%%%%%%%%%%%%%%%%%%%%%%%%%%%%

\section{A Lower Bound}

\subsection{Definitions and Lemmas}
\begin{defn}\label{covers} A pair of elements $\a \ge \b$ in the lattice $\L$ are called covers or $\a$ covers $\b$ or $\b$ is covered by $\a$ if for every lattice point $z \in \L$ and $z \neq \b$ if $\a \geq z \ge \b$ then we have $z =\a$.
\end{defn}

\begin{defn} \label{NS}
A distributive lattice $\L$ is called a \textit{thick} lattice if for every element $\a$ which is not the maximal or the minimal element of the lattice, we have a lattice point $\b_\a$ such that $\b_\a \nsim \a$
\end{defn}

Let us define concatenation of two distributive lattices $\L_1,\L_2$. Let us assume that the maximal element of $\L_1$ is $\hat{1}$ and the minimal element of $\L_2$ is $\hat{0}$. Now let us define $\L_1 \# \L_2$ as the set $\L_1 \cup \L_2/ \sim $ where $ a \sim b $ if and only if either $a=b$ or $a=\hat{1}$ and $b=\hat{0}$. Note that this relation is clearly an equivalence relation. One can give a partial order on the equivalence classes as $[a] \geq [b] $ if $ a, b \in \L_1 $ and $a \geq b$ or $a \in \L_2 $ and $b \in \L_1$ or $a , b \in \L_2 $ and $ a \geq b$. It is not difficult to see that $\L_1 \# \L_2$ is a distributive lattice if and only if $\L_1$ and $\L_2$ are distributive.

\begin{defn} A diamond in a lattice $\L$ will be the set $D=\{a,b,c,d\}$ where $D$ is a sublattice and there are two elements in $D$ which are non-comparable. In other words if $\a,\b$ are the two non-comparable elements in $D$ then we can rewrite $D$ as the set $\{ \a,\b , \a \v \b, \a \w \b\}$.
\end{defn}

\begin{defn} To a distributive lattice $\L$ with $\card{\L} \geq 2 $ and a maximal join irreducible $\a$ we can associate a subset $\L_\a=\L \setminus \{\b \in \L | \b \geq \a\}$ which we will call the pruned subset.
\end{defn}

\begin{lem}\label{join prime} Let $\b \in J$ be a join irreducible in a lattice $\L$, then for all $\theta , \delta \in \L$ if $\theta \v \delta \ge \b$ then at least one of $\theta$ or $\delta$ is larger than $\b$.
\end{lem}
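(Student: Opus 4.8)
The plan is to recognise this statement as the classical fact that in a distributive lattice every join irreducible element is \emph{join prime}, and the entire weight of the argument rests on a single well-placed application of the distributive law. So I would begin with the hypothesis $\theta \v \delta \ge \b$ and rewrite it as an identity: since meeting $\b$ with anything lying above it returns $\b$, we have $\b = \b \w (\theta \v \delta)$. This reformulation is the only genuinely creative step, and everything afterwards is forced.

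Next I would expand the right-hand side by distributivity:
\begin{equation}
\b = \b \w (\theta \v \delta) = (\b \w \theta) \v (\b \w \delta).
\end{equation}
This exhibits $\b$ as the join of the two lattice elements $\b \w \theta$ and $\b \w \delta$, each of which automatically lies below $\b$.

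At this point I would invoke join irreducibility of $\b$: since $\b = (\b \w \theta) \v (\b \w \delta)$ is a join of two elements, one of the two joinands must already equal $\b$. If $\b = \b \w \theta$ then $\b \le \theta$, and if $\b = \b \w \delta$ then $\b \le \delta$; in either case one of $\theta, \delta$ is larger than $\b$, which is exactly the assertion of the lemma.

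I expect essentially no obstacle once the meet-distributive expansion is chosen correctly; the only point deserving a remark is the degenerate case in which $\b$ is the minimal element of $\L$ (and hence arguably not a bona fide join irreducible), but there $\b \le \theta$ holds trivially and the statement is vacuous. Thus the whole difficulty, such as it is, lies in noticing that $\b \w (\theta \v \delta) = (\b \w \theta) \v (\b \w \delta)$ is the right move — after that, join irreducibility closes the argument immediately, and I would record this as the key reason distributivity is indispensable to the lemma.
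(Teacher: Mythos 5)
Your proof is correct, but it takes a different route from the paper. You argue directly from the lattice axioms: rewrite the hypothesis as $\b = \b \w (\theta \v \delta)$, expand by distributivity to $(\b \w \theta) \v (\b \w \delta)$, and let join irreducibility force one joinand to equal $\b$. The paper instead invokes the Birkhoff-style correspondence between lattice elements and order ideals of join irreducibles: writing $I_x$ for the set of join irreducibles below $x$, it notes $\b \in I_{\theta \v \delta} = I_\theta \cup I_\delta$ and concludes immediately. These are genuinely different in presentation though not in substance: the identity $I_{\theta \v \delta} = I_\theta \cup I_\delta$ \emph{is} the encoding of distributivity in the ideal representation, and its standard proof is essentially your computation. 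Your version buys self-containedness --- it uses only the definition of join irreducible and one application of the distributive law, and it makes visible exactly where distributivity is indispensable --- whereas the paper's one-liner is shorter but leans on the $I_x$ machinery, which the paper never formally defines. A further small point in your favor: you explicitly dispose of the degenerate case where $\b$ is the minimal element (under the convention that join irreducibility requires one of the joinands to equal $\b$), which the paper's argument passes over silently.
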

\begin{proof} Since $\theta \v \delta \ge \b$ we have $\b \in I_{\theta } \cup I_{\delta}$. So either $\b \in I_{\theta}$ or $\b \in I_{\delta}$. Equivalently we have either $\b \le \theta$ or $\b \le \delta$.
\end{proof}

\begin{thm} The pruned subset $\L_a$ is a sublattice and $J\setminus \{\a\}$ is its set of join irreducibles.
\end{thm}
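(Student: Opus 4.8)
The plan is to handle the two assertions separately, first showing that $\L_\a$ is closed under both operations and then matching up the join irreducibles. The structural observation driving everything is that $\L_\a$ is a down-set: if $x \in \L_\a$ and $y \le x$, then $y \in \L_\a$, since $y \ge \a$ would force $x \ge y \ge \a$, contradicting $x \not\ge \a$. Equivalently, $\L_\a$ is the complement in $\L$ of the principal filter $\set{\b \in \L \mid \b \ge \a}$, which is an up-set. I would record this at the outset and use it repeatedly.

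For the sublattice claim, closure under meet is immediate from the down-set property: given $\te, \d \in \L_\a$, we have $\te \w \d \le \te$, so $\te \w \d \in \L_\a$. Closure under join is the one place where join irreducibility of $\a$ is genuinely needed, and here I would invoke Lemma \ref{join prime}. If $\te, \d \in \L_\a$ but $\te \v \d \notin \L_\a$, then $\te \v \d \ge \a$; applying Lemma \ref{join prime} with $\b = \a$ gives $\te \ge \a$ or $\d \ge \a$, contradicting $\te, \d \in \L_\a$. Hence $\te \v \d \in \L_\a$ and $\L_\a$ is a sublattice. In particular, meets and joins computed inside $\L_\a$ agree with those computed in $\L$, a fact I will use freely below.

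It remains to identify the join irreducibles. First I would verify the inclusion $J \setminus \set{\a} \subseteq \L_\a$, and this is exactly where the maximality of $\a$ enters: if $\b \in J$ with $\b \ge \a$ and $\b \neq \a$, then $\b > \a$ is a join irreducible strictly above $\a$, contradicting that $\a$ is a maximal join irreducible; so every join irreducible other than $\a$ fails to lie above $\a$ and hence belongs to $\L_\a$. Next, since $\L_\a$ is a sublattice (so joins agree) and each such $\b$ is join irreducible in the ambient $\L$, it remains join irreducible in $\L_\a$: a decomposition $\b = y \v z$ inside $\L_\a$ is also one in $\L$, forcing $y = \b$ or $z = \b$.

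For the reverse inclusion I would argue that a join irreducible $x$ of $\L_\a$ is automatically join irreducible in $\L$. Suppose not; write $x = y \v z$ in $\L$ with $y, z < x$. Because $\L_\a$ is a down-set and $y, z \le x \in \L_\a$, both $y$ and $z$ lie in $\L_\a$, exhibiting $x$ as a nontrivial join inside $\L_\a$, a contradiction. Since $\a \notin \L_\a$, such an $x$ lies in $J \setminus \set{\a}$, completing the identification. The point to be careful about throughout is the interplay between joins taken in $\L$ and in $\L_\a$: once the sublattice property is in hand these coincide, and the down-set property is what guarantees that the $\L$-decomposition of an element of $\L_\a$ never escapes $\L_\a$.
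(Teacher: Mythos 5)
Your proof is correct, and while the sublattice half coincides with the paper's (closure under join via Lemma \ref{join prime} is exactly the paper's step; your meet-closure via the down-set property is just a cleaner packaging of the paper's observation that $\te \w \d \geq \a$ would force $\te, \d \geq \a$), the identification of the join irreducibles proceeds by a genuinely different route. The paper exhibits the $\card{J}-1$ survivors just as you do, but then rules out any \emph{new} join irreducibles by a counting argument: in a finite distributive lattice $\card{J}$ equals the length of a maximal chain, and since $\hat{1} \notin \L_\a$ every chain in $\L_\a$ has length at most $\card{J}-1$, so the $\card{J}-1$ irreducibles already produced must be all of them. You instead prove directly that any join irreducible of $\L_\a$ is join irreducible in $\L$, using the down-set property to keep an ambient decomposition $x = y \v z$ with $y,z < x$ inside $\L_\a$. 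Your argument is more elementary and more general: it shows that for \emph{any} sublattice that is an order ideal, join irreducibility descends from the sublattice to the ambient lattice, with no appeal to the chain-length characterization of $\card{J}$ (distributivity enters your proof only through Lemma \ref{join prime}); the paper's argument is shorter once one grants the structure theory, but is tied to finite distributive lattices. One small point worth making explicit in your reverse inclusion: negating join irreducibility of $x$ in $\L$ yields $y, z < x$ with $x = y \v z$ only under the standard convention that the minimal element is not join irreducible, so you should note that the minimal element of $\L_\a$ is $\hat{0}$ of $\L$ (immediate from the down-set property), whence a join irreducible $x$ of $\L_\a$ satisfies $x \neq \hat{0}$ and the dichotomy applies. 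With that half-sentence added, the proof is complete.
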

\begin{proof}
Let $\theta , \delta \in \L_\a$ if $\theta \v \delta \notin \L_\a$ then $\theta \v \delta \geq \a$, since $\a$ is a join irreducible either $\theta $ or $\delta $ is larger than $\a$ ( see lemma \ref{join prime} ) which is a contradiction to our assumption about $\theta $ and $\delta$. If $\theta \w \delta \notin \L_\a$ then $\theta \w \delta \geq \a $ which means both $\theta $ and $\delta$ is larger than $\a$ another contradiction to our assumption. So both $\theta \v \delta $ and $\theta \w \delta$ are in $\L_\a$.
Now for the second part of the statement note that the join irreducibles of $\L$ other than $\a$ remains to be join irreducible in $\L_\a$ and $\a$ being a maximal join irreducible there are no join irreducible larger than $\a$. Now we just have to show that there are no other join irreducible in $\L_\a$, which follows from the fact that $\L_\a$ being a distributive lattice the size of its set of join irreducibles is equal to the length of one of its maximal chain. Now since the maximal element of $\L$ say $\hat{1}$ is not in $\L_\a$ the length of a maximal chain in $\L_\a$ is smaller than that of $\L$ by at least one. But since we have already produced $\card{J} -1$ join irreducibles we have exact equality. or the set of join irreducibles of $\L_\a$ is $J \setminus \{\a\}$.
\end{proof}

\begin{defn} Let $\d \in \L$ we will denote the set $\{\a \in \L | \a \sim \d \}$ by $\Lambda_\delta$.
\end{defn}

\begin{lem} $\Lambda_\delta$ is a sublattice of $\L$ for all $\d$ and it is equal to $[\hat{0},\d]\cup[\d,\hat{1}]$ where $[a,b]=\{ x \in \L | a \leq x \leq b \}$.
\end{lem}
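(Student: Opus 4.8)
The plan is to prove the two claims in sequence, establishing the identification of $\Lambda_\delta$ with the union of intervals first, since the sublattice property then follows from a short case analysis.

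First I would unwind the definition of comparability. By definition $\a \sim \d$ means that $\a$ and $\d$ are comparable, i.e. either $\a \le \d$ or $\a \ge \d$. Since $\hat{0}$ and $\hat{1}$ are the minimum and maximum of $\L$, the condition $\a \le \d$ is exactly $\a \in [\hat{0},\d]$ and the condition $\a \ge \d$ is exactly $\a \in [\d,\hat{1}]$. Hence the set equality $\Lambda_\d = [\hat{0},\d]\cup[\d,\hat{1}]$ is immediate from the definitions; no lattice axioms beyond the existence of $\hat{0}$ and $\hat{1}$ are needed here.

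For the sublattice claim I would take $\a,\b \in \Lambda_\d$ and show that $\a \v \b$ and $\a \w \b$ are again comparable to $\d$, splitting into three cases according to how $\a$ and $\b$ sit relative to $\d$. If both lie in $[\hat{0},\d]$, then $\d$ is a common upper bound, so $\a \v \b \le \d$, while $\a \w \b \le \a \le \d$; both stay in $[\hat{0},\d]$. Dually, if both lie in $[\d,\hat{1}]$, then $\a \w \b \ge \d$ and $\a \v \b \ge \d$. The remaining case is the mixed one, say $\a \le \d \le \b$ (the symmetric subcase is identical): then $\a \le \b$, so $\a \v \b = \b \ge \d$ and $\a \w \b = \a \le \d$, and both are again comparable to $\d$.

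Since in every case the meet and the join lie in $\Lambda_\d$, the set is closed under $\v$ and $\w$ and is therefore a sublattice. I expect no real obstacle: the argument is purely order-theoretic and, in particular, does not invoke distributivity of $\L$ anywhere. The only point that warrants a moment's care is the mixed case, where one must observe that comparability of both $\a$ and $\b$ to $\d$ forces $\a$ and $\b$ to be comparable to each other, collapsing their join and meet to $\b$ and $\a$ respectively.
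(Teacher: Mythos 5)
Your proof is correct and follows essentially the same route as the paper's: first the immediate set identity $\Lambda_\d=[\hat{0},\d]\cup[\d,\hat{1}]$, then closure under $\v$ and $\w$ by the same case analysis (you merge the two symmetric mixed cases into one, and correctly note the key point that mixed comparability forces $\a$ and $\b$ to be comparable). No gaps; your observation that distributivity is never used also matches the paper's argument.
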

\begin{proof} Let us first prove that $\Lambda_\d=[\hat{0},\d]\cup[\d,\hat{1}]$. If $x \in \Lambda_\d$ then $x \sim \delta$ hence $x \leq \d$ or $x \geq \d$ or equivalently $x \in [\hat{0},\d]$ or $x \in [\d,\hat{1}]$. Thus we have $\Lambda_\d \subset [\hat{0},\d]\cup[\d,\hat{1}]$. For the other inclusion, let $x \in [\hat{0},\d]\cup[\d,\hat{1}]$ then clearly $x$ is comparable to $\d$. So $\Lambda_\d=[\hat{0},\d]\cup[\d,\hat{1}]$. Now let us prove that $[\hat{0},\d]\cup[\d,\hat{1}]$ is a sublattice. Let $x,y \in [\hat{0},\d]\cup[\d,\hat{1}]$ then we have four cases to consider according as listed below.
\begin{enumerate}
\item Both $x,y \in [\hat{0},\d]$ then since $x,y$ are both larger than $\d$ then both $x \v y$ and $x \w y$ are larger than $\d$. So $x \v y , x \w y \in [\hat{0},\d]$.
\item Both $x,y \in [\d,\hat{1}]$ then similar to the above case we have both $x,y$ smaller than $\d$ which means both $x \v y, x \w y$ are smaller than $\d$.
\item $x \leq \d$ and $y \geq \d$ in this case $y \geq x$ so $ x \v y = y $ and $x \w y =x$ so we have both $x \v y, x \w y \in \Lambda_\d$.
\item Similar to the above case we have $y \leq \d$ and $x \geq \d$.
\end{enumerate}
This proves that $\Lambda_\d$ is a sublattice of $\L$.
\end{proof}

\begin{defn} Let us define a function $f \colon \L \longrightarrow \mathbb{N}$ as $f(\d)= \card{\Lambda_\d}$.
\end{defn}

\begin{lem} A distributive lattice $\L$ with more than one element is thick if and only if it is not isomorphic to concatenation of two lattices.
\end{lem}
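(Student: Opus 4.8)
The plan is to prove both implications through a single structural observation: a non-trivial concatenation $\L_1 \# \L_2$ possesses a distinguished \emph{gluing point} $p$, namely the element obtained by identifying the top $\hat{1}$ of $\L_1$ with the bottom $\hat{0}$ of $\L_2$, and by the very definition of $\#$ this $p$ is comparable to every element of the lattice while lying strictly between the global minimum and maximum. Conversely, any interior element comparable to everything forces exactly such a decomposition. I read ``concatenation of two lattices'' as a non-trivial concatenation, i.e.\ both factors have at least two elements, since gluing on a singleton merely returns the original lattice and would make the statement vacuous.

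For the forward direction I would argue by contrapositive. Assume $\L \cong \L_1 \# \L_2$ with $\card{\L_1}, \card{\L_2} \geq 2$. By the order defining $\#$, every element of $\L_1$ satisfies $[a] \leq p$ and every element of $\L_2$ satisfies $[a] \geq p$, so $p \sim x$ for all $x \in \L$. Since $\card{\L_1} \geq 2$ there is an element strictly below $p$, whence $p \neq \hat{0}$, and since $\card{\L_2} \geq 2$ there is an element strictly above $p$, whence $p \neq \hat{1}$. Thus $p$ is a non-extreme element admitting no non-comparable partner, so $\L$ fails the thickness condition.

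For the converse I would start from the negation of thickness: there is $p \in \L$ with $p \neq \hat{0}$, $p \neq \hat{1}$, and $p \sim x$ for every $x \in \L$. Set $\L_1 = [\hat{0}, p]$ and $\L_2 = [p, \hat{1}]$. Each is an interval, hence a sublattice, and being a sublattice of a distributive lattice it is itself distributive; moreover $\L_1$ has top $p$, $\L_2$ has bottom $p$, and each has at least two elements because $p$ is interior. Since $p$ is comparable to everything, $\L = \L_1 \cup \L_2$ with $\L_1 \cap \L_2 = \{p\}$. It then remains to check that the identity induces a lattice isomorphism $\L \cong \L_1 \# \L_2$: within each $\L_i$ the order is inherited unchanged, and for $x \in \L_1$, $y \in \L_2$ the chain $x \leq p \leq y$ yields precisely the cross-relation $x \leq y$ prescribed by $\#$, with no relation in the opposite direction (from $y \leq x$ one gets $y \leq x \leq p \leq y$, collapsing everything to $p$). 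This is exactly the partial order defining $\L_1 \# \L_2$.

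The bookkeeping in both directions is routine; the one point requiring care is the final order-isomorphism verification in the converse, where one must confirm that no relation of $\L$ is lost or introduced by the decomposition, and in particular that the only comparabilities between $\L_1 \setminus \{p\}$ and $\L_2 \setminus \{p\}$ are the ascending ones dictated by $\#$. This is guaranteed by $p$ sitting comparably between the two halves. I expect the only genuine conceptual step to be recognising that ``not thick'' is precisely the existence of an interior element comparable to all others, i.e.\ a candidate gluing point; once that dictionary is in place the equivalence follows with little further effort.
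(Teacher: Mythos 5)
Your proof is correct and follows essentially the same route as the paper's: the forward direction via the gluing point of a concatenation being comparable to everything, and the converse by splitting at the interior element $p$ into the intervals below and above it and checking this realises the concatenation order. You are in fact slightly more careful than the paper, which leaves implicit both the nontriviality convention for $\#$ (needed so the gluing point is not $\hat{0}$ or $\hat{1}$) and the final order-isomorphism verification, and which has a harmless labeling slip writing $\L_1 \# \L_2$ where its own definitions give $\L_2 \# \L_1$.
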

\begin{proof}
We will prove the equivalence of the negations. If a lattice $\L$ is isomorphic to concatenation of the lattices let us say $\L=\L_1\#\L_2$ then the class of the minimal element $\hat{0}$ of $\L_2$ which is equal to the class of the maximal element $\hat{1}$ of $\L_1$ is comparable to all elements in both $\L_1$ and $L_2$. That proves that the lattice is not thick.
Now if the lattice $\L$ is not thick then there is a $\delta$ comparable to all elements in $\L$. So let $\L_1=\{\a \in \L | \a \geq \delta \}$ and $\L_2=\{ \a \in \L | \delta \geq \a \}$. $\L= \L_1 \cup \L_2$. Let us equip the set $\L$ with trivial equivalence relation $\sim $ that is $\a \sim \b $ if and only if $\a = \b$ then we see that $\L \simeq \frac{\L_1 \cup \L_2}{ \sim} \simeq \L_1 \# \L_2$; the first isomorphism since we have a quotient with respect to a trivial equivalence relation, and second isomorphism since this equivalence relation coincides with the concatenation relation in this case. This completes the proof.
\end{proof}

\begin{lem}\label{decomposition} For every distributive lattice $\L$, there are thick sublattices $\L_1,\L_2 ,\ldots , \L_n$ such that $\L$ is isomorphic to $\L_1 \# \L_2 \# \L_3 \#  \ldots \# \L_n$.
\end{lem}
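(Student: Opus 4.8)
The plan is to induct on the cardinality $\card{\L}$, leaning on the immediately preceding lemma, which says that a distributive lattice with more than one element fails to be thick precisely when it is a concatenation of two lattices. The base case $\card{\L}=1$ is immediate: such a lattice has no element that is neither maximal nor minimal, so it is vacuously thick and is its own one-term concatenation, giving $n=1$.

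For the inductive step assume $\card{\L} \geq 2$ and that the statement holds for every distributive lattice of strictly smaller cardinality. If $\L$ is thick we are finished, taking $n=1$ and $\L_1 = \L$. Otherwise the preceding lemma supplies an element $\delta$ that is comparable to every element of $\L$ and is neither the maximum $\hat 1$ nor the minimum $\hat 0$; setting $A = [\hat 0, \delta]$ and $B = [\delta, \hat 1]$ we have $\L \simeq A \# B$. Both $A$ and $B$ are intervals, hence sublattices of $\L$, and since $\hat 0 < \delta < \hat 1$ each of them contains at least two elements.

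The decisive numerical observation is that concatenation identifies exactly one point, the maximum $\delta$ of $A$ with the minimum $\delta$ of $B$, so that $\card{\L} = \card{A} + \card{B} - 1$; together with $\card{A}, \card{B} \geq 2$ this forces $\card{A} < \card{\L}$ and $\card{B} < \card{\L}$. I would then apply the inductive hypothesis to $A$ and to $B$, writing $A \simeq A_1 \# \cdots \# A_p$ and $B \simeq B_1 \# \cdots \# B_q$ with all $A_i, B_j$ thick. Concatenating and using associativity of $\#$ gives $\L \simeq A_1 \# \cdots \# A_p \# B_1 \# \cdots \# B_q$, the desired presentation.

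I expect the two points requiring real attention to be the following. First, associativity of the concatenation operation must be verified, so that the two-level decomposition above genuinely collapses into a single chain of concatenations; this reduces to checking that successive identifications of maxima with minima are independent of the order in which they are performed, which is routine but should be recorded. Second, one must confirm that the thick factors are sublattices of $\L$ itself rather than only of $A$ or $B$: since each $A_i$ is a sublattice of $A$, each $B_j$ a sublattice of $B$, and $A$ and $B$ are sublattices of $\L$, transitivity of the sublattice relation settles this. Finiteness of $\L$, assumed throughout, ensures the strictly decreasing cardinalities terminate the recursion.
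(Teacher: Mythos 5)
Your proof is correct and follows essentially the same route as the paper's: induction on $\card{\L}$, using the negation of thickness to produce a $\delta$ comparable to everything, splitting $\L$ into the intervals $[\hat{0},\delta]$ and $[\delta,\hat{1}]$, and concatenating the inductive decompositions. If anything you are slightly more careful than the paper, since you explicitly note that $\delta$ is neither $\hat{0}$ nor $\hat{1}$ (so both factors are strictly smaller, making the induction terminate) and flag the associativity of $\#$, both of which the paper uses tacitly.
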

\begin{proof}
We prove this statement by an induction argument on the number $\card{\L}$. For the base case, a lattice with a single point $\hat{0}=\hat{1}$ is thick by definition so we have nothing to prove. Now let us assume that we have the result for all lattices with size less than that of $\L$. if the lattice $\L$ is thick then we have nothing to prove, otherwise there is a lattice point $\d \in \L$ such that every lattice point $\a \in \L$ is comparable to $\d$. Consider $\L_1=\{ \a \in \L | \a \geq \d \}$ and $\L_2=\{\a \in \L | \a \leq \d \}$. We have $\L= \L_2 \cup \L_1$ and the minimal element of $\L_2$ is $\d$ which is also the minimal element of $\L_1$. Hence the concatenation $\L_2 \# \L_1$ is isomorphic to $\L$. Now by induction we can decompose both the lattices $\L_1$ and $\L_2$ into thick lattices.
\end{proof}

\begin{lem}\label{join lemma} For every covers $\a \ge \g \in \L$ if $\a$ is not a join irreducible we have a unique join irreducible $\b \in J$ such that $\a = \g \v \b$ and equivalently $I_a = \{\b\} \cup I_\g$, if $\a$ is a join irreducible then there is a unique join irreducible $\b$ such that $\g = \b \w \a$.
\end{lem}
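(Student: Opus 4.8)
The plan is to work throughout with the order-ideal description of $\L$ furnished by Birkhoff's theorem, using the sets $I_\a=\{j\in J\mid j\le\a\}$ already in play: recall that $\a\le\b$ iff $I_\a\subseteq I_\b$, that $I_{\a\v\b}=I_\a\cup I_\b$ and $I_{\a\w\b}=I_\a\cap I_\b$, and that in a finite distributive lattice $\a$ covers $\g$ exactly when $I_\g\subsetneq I_\a$ with $I_\a\setminus I_\g$ a single element, necessarily a maximal element of $I_\a$. With this dictionary the whole lemma becomes a statement about removing one element from a down-set, and the two cases correspond to whether that removed element is $\a$ itself.

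First I would treat the case $\a\notin J$. Since $\a$ covers $\g$, write $I_\a\setminus I_\g=\{\b\}$ for the unique join irreducible $\b$; this is forced, so both the uniqueness and the identity $I_\a=\{\b\}\cup I_\g$ are immediate. To get $\a=\g\v\b$ I would compute $I_{\g\v\b}=I_\g\cup I_\b$ and check it equals $I_\a$: the inclusion $I_\g\cup I_\b\subseteq I_\a$ holds because $\b\le\a$ gives $I_\b\subseteq I_\a$, while $\b\in I_\b$ supplies the one missing element, so equality holds and $\g\v\b=\a$. I would also note that $\a\notin J$ guarantees $\a\notin I_\a$, hence $\b\ne\a$ and the decomposition is genuine; this is exactly what distinguishes this case from the next.

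Next, for $\a\in J$, the same cover analysis gives $I_\a\setminus I_\g=\{\a\}$, so the join description degenerates to the trivial $\a=\g\v\a$ and one must instead realise $\g$ as a meet. Here I would aim to produce a join irreducible $\b$ with $\g\le\b$ and $\a\not\le\b$; granting such a $\b$, the verification $\b\w\a=\g$ is short, since $\g=\g\w\a\le\b\w\a\le\a$ while $\a\not\le\b$ forces $\b\w\a\ne\a$, so the cover relation $[\g,\a]=\{\g,\a\}$ leaves only $\b\w\a=\g$. Equivalently this is the requirement $I_\g=I_\b\cap I_\a$, i.e. $I_\b$ must contain every join irreducible strictly below $\a$ yet miss $\a$ itself. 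My strategy for constructing $\b$ would be to look among the join irreducibles lying just above $\g$ in $\L$ and to use distributivity to single one out.

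The step I expect to be the main obstacle is precisely this last construction, namely establishing both the existence and the uniqueness of a join irreducible $\b$ with $\g\le\b$ and $\a\not\le\b$. The verification that such a $\b$ meets the conclusion is routine, but isolating one, and proving there is exactly one, is the entire content of the join-irreducible case; I would scrutinise whether the bare hypothesis ``$\a$ covers $\g$ and $\a\in J$'' is by itself enough to pin $\b$ down, or whether the argument quietly needs an auxiliary feature of $\a$ (for instance that $\g$ is itself join irreducible, or that $\a$ is not the top element) to force the join irreducibles above $\g$ that avoid $\a$ to form a singleton. This is where I would concentrate the effort, since the meet in the conclusion is governed by the fine structure of the join-irreducible poset $J$ around $\a$ rather than by the simple ``remove one element'' picture that settles the first case.
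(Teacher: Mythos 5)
Your first case is correct and is, in substance, the whole of the paper's own proof: the paper shows directly that $I_\a \setminus I_\g$ is a singleton --- given $\b_1,\b_2 \in I_\a\setminus I_\g$ it squeezes $\g \le \b_1\v\g \le \b_1\v\b_2\v\g \le \a$, uses the cover relation to force $\b_2 \le \b_1\v\g$, then join-primeness (lemma \ref{join prime}) together with $\b_2\not\le\g$ to get $\b_2\le\b_1$, and symmetrically $\b_1\le\b_2$ --- whence $I_\a=\{\b\}\cup I_\g$ and $\a=\g\v\b$. Your Birkhoff-dictionary formulation (a cover deletes one maximal element of the down-set $I_\a$) is the same fact in different clothing, and your uniqueness check, that any join irreducible $\b'$ with $\a=\g\v\b'$ must lie in the singleton $I_\a\setminus I_\g$, is sound. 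On the part you completed, you and the paper agree.

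The step you declined to carry out --- existence and uniqueness, when $\a\in J$, of a join irreducible $\b$ with $\g=\b\w\a$ --- is a genuine gap in your proposal as a proof of the full statement, but your suspicion that the bare hypotheses cannot pin $\b$ down is exactly right: the second assertion is false as stated, and the paper's proof never addresses it either (its argument stops at $I_\a=\{\b_1\}\cup I_\g$, which for $\a\in J$ gives $\b_1=\a$ and only the vacuous $\a=\g\v\a$). Your reduction is correct: given the cover, $\b\w\a=\g$ iff $\g\le\b$ and $\a\not\le\b$. But existence can fail: in the two-element chain $\hat{0}<\hat{1}$, the element $\a=\hat{1}$ is join irreducible and covers $\g=\hat{0}$, yet the only join irreducible is $\a$ itself and $\a\w\a=\a\ne\g$; the same happens at the top cover of any chain, where no join irreducible $\b$ satisfies $\a\not\le\b$. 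Uniqueness can also fail: in the Boolean lattice on three atoms, take $\a$ an atom covering $\g=\hat{0}$; each of the other two atoms $\b$ gives $\b\w\a=\hat{0}=\g$. So there was no argument for you to find; a correct version of the second clause needs extra hypotheses on $\L$ or on the cover, and the downstream use in theorem \ref{edge approx} (which needs the produced join irreducible to be non-comparable to the covered element in order to manufacture a diamond) is affected by the same issue when the upper end of an edge is itself join irreducible.
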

\begin{proof}
Let $\a \geq \g$ be covers, consider $I=I_{\a} \ I_{\g}$. Let $\b_1,\b_2 \in I$ then consider $\g \leq \b_1 \v \g \leq \b_1 \v \b_2 \v \g \leq \a$ since $\a$ covers $\g$ we have $\b_1 \v \g = \b_1 \v \b_2 \v \g$ or $\b_2 \leq \b_1 \v \g$. But $\b_2 \ge \g$ so we must have $\b_2 \leq \b_1$. We can do the same calculation with $\g \leq \b_2 \v \g \leq \b_2 \v \b_1 \v \g \leq \a $ to get $\b_1 \leq \b_2$. Hence we have $\b_1 = \b_2$ and $I_{\a} = \{\b_1 \} \cup I_{\g}$ and equivalently $\a = \b_1 \v \g$. Or taking $\b = \b_1$ we have the result.
\end{proof}

\begin{thm} The ideal $I(\L) = \langle x_\a x_\b - x_{\a \v \b} x_{\a \w \b} | \a, \b \in \L \rangle $ in the polynomial algebra $k[\L]$ is optimally generated by $n(\L)$ elements.
\end{thm}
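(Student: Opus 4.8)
The plan is to compute the minimal number of generators of $I(\L)$ via graded Nakayama and thereby reduce the whole statement to a linear independence assertion in degree two. First observe that the generator $x_\a x_\b - x_{\a\v\b}x_{\a\w\b}$ vanishes identically whenever $\a$ and $\b$ are comparable, since then $\{\a\v\b,\a\w\b\}=\{\a,\b\}$ and the binomial cancels. Hence the only nontrivial generators come from non-comparable pairs, and since each unordered non-comparable pair $\{\a,\b\}$ contributes exactly one binomial (distinct pairs give distinct binomials), the ideal is generated by at most $n(\L)$ elements. This already gives the upper bound; the content of the theorem is that one cannot do better.

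For the lower bound I would use that $k[\L]$ is standard graded by total degree, with irrelevant maximal ideal $\mathfrak{m}=\langle x_\a \mid \a\in\L\rangle$, and that $I(\L)$ is homogeneous, generated in degree two. By graded Nakayama the minimal number of generators equals $\dim_k\big(I(\L)/\mathfrak{m}I(\L)\big)$. Since $I(\L)$ has no elements in degrees $0$ and $1$, the degree-two part of $\mathfrak{m}I(\L)$ vanishes, while in each degree $d\geq 3$ one has $I(\L)_d=k[\L]_{d-2}\cdot I(\L)_2\subseteq \mathfrak{m}I(\L)$ because $I(\L)$ is generated in degree two. Thus $\dim_k\big(I(\L)/\mathfrak{m}I(\L)\big)=\dim_k I(\L)_2$, and it remains to show that the $n(\L)$ binomials $g_{\a,\b}=x_\a x_\b - x_{\a\v\b}x_{\a\w\b}$, indexed by non-comparable pairs, are $k$-linearly independent, i.e. $\dim_k I(\L)_2=n(\L)$.

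This linear independence is the crux. Call a degree-two monomial $x_p x_q$ \emph{incomparable} if $p\nsim q$ and \emph{comparable} otherwise; these two sets of monomials are disjoint. In each $g_{\a,\b}$ the positive monomial $x_\a x_\b$ is incomparable, whereas the negative monomial $x_{\a\v\b}x_{\a\w\b}$ is comparable, since $\a\v\b\geq\a\w\b$ always holds. Moreover distinct non-comparable pairs yield distinct incomparable monomials $x_\a x_\b$. Consequently, in any relation $\sum_{\{\a,\b\}\nsim} c_{\a,\b}\,g_{\a,\b}=0$, the incomparable monomial $x_\a x_\b$ occurs in the single generator $g_{\a,\b}$ and can never be produced by the comparable negative term of any other generator; its coefficient in the sum is therefore $c_{\a,\b}$, forcing $c_{\a,\b}=0$ for every pair. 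This yields $\dim_k I(\L)_2=n(\L)$ and hence $\mu(I(\L))=n(\L)$.

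The only point demanding care is the grading bookkeeping in the Nakayama step, namely confirming that no minimal generator is hidden in higher degree; this is immediate once one notes that $I(\L)$ is generated purely in degree two. The linear independence itself is forced cleanly by the fact that joins and meets are always comparable, so incomparable monomials can only arise as the distinguished positive terms of the $g_{\a,\b}$, and the argument therefore carries no genuine obstacle beyond setting up the degree-two reduction correctly.
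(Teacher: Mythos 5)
Your proposal is correct, and its combinatorial heart is exactly the paper's: since $\a \v \b$ and $\a \w \b$ are always comparable, an incomparable monomial $x_\a x_\b$ can occur only as the positive term of its own diamond relation, and distinct non-comparable pairs give distinct such monomials. Where you differ is the algebraic wrapper. The paper argues by contradiction that no single diamond relation $f_1$ lies in the ideal generated by the remaining ones, using homogeneity in degree two to reduce the coefficients $\epsilon(i)$ to scalars --- that is, it proves the diamond relations form an \emph{irredundant} generating set, and tacitly invokes the fact that for a homogeneous ideal all minimal generating sets have the same cardinality. Your route makes that suppressed step explicit: graded Nakayama gives $\mu\bigl(I(\L)\bigr)=\dim_k\bigl(I(\L)/\mathfrak{m}I(\L)\bigr)$, your degree bookkeeping identifies this with $\dim_k I(\L)_2$, and the monomial argument, upgraded from non-membership of one fixed $f_1$ to full linear independence of all the $g_{\a,\b}$, yields $\dim_k I(\L)_2=n(\L)$. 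What your version buys is completeness: irredundancy of one particular generating set does not by itself bound the size of an arbitrary generating set, and the Nakayama step is precisely what licenses the paper's opening assumption that ``not optimally generated'' means some $f_1$ is a $k$-combination of the others; the linear-independence formulation also handles possible cancellations among the $f_i$ more cleanly than the paper's phrase that the monomial ``occurs in $f_i$ for some $i$.'' What the paper's version buys is brevity, avoiding any named commutative-algebra machinery. Finally, your remark that comparable pairs contribute the zero binomial cleanly disposes of the apparently extra generators in the definition of $I(\L)$, a point the paper passes over silently.
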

\begin{proof}Let $D_1,D_2, \ldots, D_n$ where $n=n(\L)$ be all the distinct diamonds of $\L$ and let us say $D_i = \{ \theta_i, \delta_i , \theta_i \v \delta_i , \theta_i \w \delta_i \}$.Let us also denote the diamond relations $f_{D_i}=x_{\theta_i}x_{\delta_i}-x_{\theta_i \v \delta_i}x_{\theta_i \w \delta_i}$ by $f_i$. If the ideal $I(\L)$ is not optimally generated let us say $f_1=x_{\theta_1}x_{\delta_1}-x_{\theta_1 \v \delta_1}x_{\theta_1 \w \delta_1}$ can be generated by $f_2,f_3, \ldots , f_n$.

So we have $\epsilon(i) \in k[\L]$ such that $f_i= \sum_{i \geq 2} \epsilon(i)f_i$. Now since $f_i$ are homogeneous polynomials of degree two, we have $\epsilon(i) \in k$. So $x_{\theta_1}x_{\delta_1} = x_{\theta_1 \v \delta_1}x_{\theta_1 \w \delta_1} + \sum \epsilon(i)f_i$. Which means the product $x_{\theta_1}x_{\delta_1}$ occurs in $f_i$ for some $i$. Or equivalently there is a diamond $D_i$ with $i >1$ such that $\theta_1, \delta_1 \in D_i=\{\theta_i , \delta_i , \theta_i \v \delta_i , \theta_i \w \delta_i \}$. Since we have $\theta_j \nsim \delta_j$ from the definition of a diamond, we must have $\{\theta_1, \delta_1\}=\{\theta_i , \delta_i\}$ i.e $D_1=D_i$ which contradicts our assumption that the diamonds $D_i$ are all distinct for distinct $i$.

\end{proof}

\begin{thm}\label{first}\label{equation}
For a distributive lattice $\L$ we have $n(\mathcal{L})=1/2(\card{\mathcal{L}}^2 - \displaystyle{\sum_{\d \in \L} f(\d)})$.
\end{thm}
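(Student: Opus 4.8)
The plan is to prove this identity by a complementary counting argument on ordered pairs of lattice elements, sorting them into comparable and non-comparable classes. There are exactly $\card{\L}^2$ ordered pairs $(\theta,\delta)$ with $\theta,\delta \in \L$, and each such pair is either comparable or non-comparable. First I would observe that, by the definitions of $\Lambda_\delta$ and of $f$, for a fixed $\delta$ the number of elements $\theta$ with $\theta \sim \delta$ is precisely $\card{\Lambda_\delta}=f(\delta)$. Summing over all $\delta \in \L$, the total number of comparable ordered pairs is $\sum_{\delta \in \L} f(\delta)$; I note that this count includes the diagonal pairs $(\delta,\delta)$, since $\delta \sim \delta$ always holds.

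Consequently, the number of non-comparable ordered pairs equals $\card{\L}^2 - \sum_{\delta \in \L} f(\delta)$. To pass from ordered to unordered pairs, I would use that non-comparability is both symmetric and irreflexive: $\theta \nsim \delta$ forces $\theta \neq \delta$, and $\theta \nsim \delta$ if and only if $\delta \nsim \theta$. Hence each unordered non-comparable pair $\{\theta,\delta\}$ is counted exactly twice among the ordered non-comparable pairs, so dividing by two yields
\[
n(\L) = \frac{1}{2}\left(\card{\L}^2 - \sum_{\delta \in \L} f(\delta)\right),
\]
which is exactly the asserted formula.

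The only point requiring care — and really the only potential obstacle — is the treatment of the diagonal. Because $\Lambda_\delta$ is defined to contain every element comparable to $\delta$, and $\delta$ is comparable to itself, the diagonal contributes $\card{\L}$ to the comparable tally through $\sum_{\delta} f(\delta)$. This is harmless for the final formula, since the diagonal never appears among the non-comparable pairs and is simply subtracted off correctly inside $\card{\L}^2 - \sum_{\delta} f(\delta)$. I expect no deeper difficulty: once the reflexive and symmetric nature of the comparability relation is made explicit, the identity is an immediate consequence of partitioning $\L \times \L$ into its comparable and non-comparable parts.
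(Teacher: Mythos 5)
Your proof is correct and is essentially the paper's own argument: both fix $\delta$, use $f(\delta)=\card{\Lambda_\delta}$ to count the elements comparable to $\delta$, pass to the complement in $\L$ to count non-comparable pairs, sum over $\delta$, and divide by two to account for the symmetric double count. Your explicit remark about the diagonal pairs $(\delta,\delta)$ is a point the paper handles only implicitly (via $\delta \in \Lambda_\delta$), so making it explicit is a small clarity improvement rather than a departure.
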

\begin{proof}

\noindent
Let us fix a $\d \in \L$ and let us consider the set $\{\theta \in \L | \theta \nsim \d \}$. For each element $\theta$ from this set we have a diamond relation given by the non-comparable pair $(\theta, \delta)$. Clearly we have $\{\theta \in \L | \theta \nsim \d \}= \L \setminus \Lambda_\d$. So we have the following bijective sets, $\{(\theta, \delta) | \theta \nsim \delta \} \simeq \{\theta \in \L | \theta \nsim \d \}= \L \setminus \Lambda_\d$. Thus we have: \[\card{\{(\theta, \delta) | \theta \nsim \delta \}}= \card{\L} - \card{\Lambda_\d}.\]
we denote $\card{\Lambda_\d}$ by $f(\d)$ which lets us to rewrite the above equation as : \[\card{\{(\theta, \delta) | \theta \nsim \delta \}}= \card{\L} - f(\d).\]

Now let us sum both sides for all choices of $\d \in \L$ and considering the double counting of the non-comparable pairs we get \[ 2n(\L)=\displaystyle{\sum_{\d \in \L} \card{\L} - f(\d) }\] or \[ 2n(\L)= \card{\L}^2 -\displaystyle{\sum_{\d \in \L} f(\d) }\] or \[n(\mathcal{L})=1/2(\card{\mathcal{L}}^2 - \displaystyle{\sum_{\d \in \L} f(\d)}).\]

\end{proof}

\begin{lem}\label{approx f(d)} $f(\d) \geq \card{J}$ for all $\d \in \L$.
\end{lem}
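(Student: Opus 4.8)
The plan is to exploit the description $\Lambda_\d = [\hat{0},\d]\cup[\d,\hat{1}]$ established in the previous lemma, together with the fact (already invoked in the theorem on pruned subsets) that in a finite distributive lattice the number of join irreducibles $\card{J}$ equals the length of any maximal chain. Since every element of $\Lambda_\d$ is by definition comparable to $\d$, it suffices to exhibit inside $\Lambda_\d$ a chain with at least $\card{J}$ edges, i.e. at least $\card{J}+1$ elements.

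First I would build such a chain by hand. Choose a saturated chain $\hat{0}=x_0<x_1<\cdots<x_k=\d$ in the interval $[\hat{0},\d]$ (so each $x_{i+1}$ covers $x_i$) and a saturated chain $\d=y_0<y_1<\cdots<y_m=\hat{1}$ in the interval $[\d,\hat{1}]$. Gluing them at $\d$ produces a chain from $\hat{0}$ to $\hat{1}$ every member of which lies in $[\hat{0},\d]\cup[\d,\hat{1}]=\Lambda_\d$.

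The decisive step is to argue that this concatenated chain is a maximal chain of $\L$. Because each interval is order-convex, a covering relation occurring within $[\hat{0},\d]$ or within $[\d,\hat{1}]$ is also a covering relation in $\L$; hence nothing can be inserted between consecutive terms, and the glued chain is saturated and runs from $\hat{0}$ to $\hat{1}$, so it is maximal in $\L$. By the gradedness of distributive lattices all maximal chains have the same length, namely $\card{J}$, so our chain has exactly $\card{J}+1$ elements, all contained in $\Lambda_\d$. Therefore $f(\d)=\card{\Lambda_\d}\geq \card{J}+1\geq \card{J}$.

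The main obstacle I anticipate is precisely this passage from ``two saturated chains in the subintervals'' to ``a maximal chain of the whole lattice,'' together with the bookkeeping of length (number of covers) against number of vertices; once the Jordan--Dedekind property and the identity $\card{J}=\mathrm{rank}(\L)$ are in hand, the counting is immediate. I note in passing that this argument in fact delivers the sharper bound $f(\d)\geq \card{J}+1$.
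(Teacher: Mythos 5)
Your proof is correct and takes essentially the same route as the paper's, which simply asserts that $\Lambda_\d$ contains a maximal chain of $\L$ through $\d$ and invokes the fact that all maximal chains have length $\card{J}$; your gluing of saturated chains in $[\hat{0},\d]$ and $[\d,\hat{1}]$ via order-convexity just supplies the details the paper leaves implicit. Your closing remark is also right: counting vertices rather than covers gives the slightly sharper bound $f(\d)\geq \card{J}+1$, of which the paper's stated inequality is a weakening.
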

\begin{proof} $f(\d) = \card{\Lambda_\d}$ now since $\Lambda_\d$ will contain a maximal chain of $\L$ through $\d$ we have $\card{\Lambda_\d} \geq \card{\mathcal{M}_\d}$, where $\mathcal{M}_\d$ is a maximal chain through $\d$. Since all maximal chains of $\L$ have length equal to $\card{J}$ we get the result.
\end{proof}

\subsection{The inequalities}
Let us conclude this section with a lower bound for the numbers $n(\L)$ in the theorem below.
\begin{thm} \label{lower bound} For a distributive lattice $\L$ we have $n(\L) \geq \card{L} - \card{J}$.
\end{thm}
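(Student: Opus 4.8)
The plan is to exhibit at least $\card{\L}-\card{J}$ distinct non-comparable pairs directly, by setting up an injection out of the elements that avoid a fixed maximal chain. The starting point is that a finite distributive lattice is graded: it satisfies the Jordan--Dedekind chain condition, so all maximal chains have the same number of elements and there is a rank function $\rho$ on $\L$ with $\rho(\hat 0)=0$, strictly increasing along the order (that is, $\a<\b$ forces $\rho(\a)<\rho(\b)$). By Lemma \ref{approx f(d)} a maximal chain of $\L$ has $\card{J}$ elements, so $\rho$ takes values in $\set{0,1,\dots,\card{J}-1}$. The crucial consequence is the observation that \emph{two distinct elements of equal rank are non-comparable}: if $\a<\b$ then $\rho(\a)<\rho(\b)$, so equal rank rules out comparability.

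Next I would fix one maximal chain $\mathcal{M}=\set{c_0<c_1<\dots<c_{\card{J}-1}}$ with $\rho(c_i)=i$, so that $\card{\mathcal{M}}=\card{J}$ and $\card{\L\setminus\mathcal{M}}=\card{\L}-\card{J}$. For each $x\in\L\setminus\mathcal{M}$ the chain contains the unique element $c_{\rho(x)}$ of the same rank as $x$, and $c_{\rho(x)}\neq x$ precisely because $x\notin\mathcal{M}$. By the observation above $x\nsim c_{\rho(x)}$, so $\set{x,c_{\rho(x)}}$ is a non-comparable pair, equivalently a diamond.

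I would then verify that $x\mapsto\set{x,c_{\rho(x)}}$ is injective on $\L\setminus\mathcal{M}$. Each such pair contains exactly one element off the chain, namely $x$, and one on the chain, namely $c_{\rho(x)}$; hence the off-chain member recovers $x$, and distinct $x,x'\in\L\setminus\mathcal{M}$ produce distinct pairs. Consequently the $\card{\L}-\card{J}$ elements of $\L\setminus\mathcal{M}$ yield $\card{\L}-\card{J}$ distinct diamonds, giving $n(\L)\geq\card{\L}-\card{J}$.

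The only step that needs genuine care is the gradedness of $\L$ and the induced rank function, together with the implication ``equal rank and distinct $\Rightarrow$ non-comparable''; both are standard for finite distributive lattices (via Birkhoff's representation by down-sets), so I expect no real obstacle and the rest is bookkeeping. If one prefers to remain inside the framework of Lemma \ref{equation}, the same injection is equivalent to the estimate $\sum_{\d\in\L}f(\d)\le\card{\L}^2-2(\card{\L}-\card{J})$, which on substitution into $n(\L)=\tfrac12\bigl(\card{\L}^2-\sum_{\d\in\L}f(\d)\bigr)$ yields the identical conclusion; I find the direct counting argument cleaner and would present that one.
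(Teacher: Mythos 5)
Your proof is correct, but it takes a genuinely different route from the paper. The paper's proof injects $\L \setminus J$ into the set of non-comparable pairs by writing each $\theta \in \L\setminus J$ as a join $\theta = \a_1 \v \a_2$ of a non-comparable pair (such a pair exists exactly because $\theta$ is not join irreducible, and the map is injective because the join recovers $\theta$); no gradedness is invoked. You instead inject the complement of a fixed maximal chain $\mathcal{M}$, pairing each off-chain element $x$ with the unique chain element $c_{\rho(x)}$ of the same rank, with injectivity read off from the unique off-chain member of each pair. Both injections hit sets of the same size, since under the paper's operative convention (forced by the equality cases: a chain and a diamond) $\card{J}$ equals the number of \emph{elements} of a maximal chain, so $\card{\L\setminus\mathcal{M}}=\card{\L}-\card{J}=\card{\L\setminus J}$; your argument makes this convention explicit, while the paper's requires the minimal element to be counted in $J$ so that every $\theta\in\L\setminus J$ is genuinely join-reducible. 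The trade-off: your rank argument needs the Jordan--Dedekind property as an external input but then works verbatim in any graded lattice (giving $n \geq \card{\L} - (\mathrm{rank}+1)$ without distributivity), whereas the paper's pairs carry the extra structure $\a_1\v\a_2=\theta$, which is what the subsequent equality analysis (Lemma \ref{forward}, via pruned lattices, counting one new diamond for each $b \in K_\a$) actually exploits; your same-rank pairs would not feed that induction as directly.
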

\begin{proof}For each $\theta \in \L \setminus J$ let us associate a diamond relation as $(\a_1,\a_2)$ where $\a_1,\a_2$ is a pair of elements given by property that $\a_1 \v \a_2 =\theta$ there exist such pair since we have chosen $\theta $ from the set of elements outside the poset of join-irreducibles. There could be more than one such pair so we chose one in such cases on ambiguity. Note that this association is one to one so we have $\card{\L \setminus J} \leq \card{ \{(\a,\b) | \a \nsim \b \}}$. Hence $n(\L) \geq  \card{L} -\card{J}$.
\end{proof}

\begin{lem}\label{forward} If $\L$ is thick and $\L$ is not a diamond then $n(\L) > \card{\L} -\card{J}$.
\end{lem}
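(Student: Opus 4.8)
The plan is to work from the partition of the non-comparable pairs according to their joins, which is already implicit in the proof of Theorem \ref{lower bound}. For $\theta \in \L \setminus J$ let $P_\theta$ denote the set of unordered pairs $\{\a,\b\}$ with $\a \nsim \b$ and $\a \v \b = \theta$. For a non-comparable pair the join $\a \v \b$ is never join irreducible (otherwise $\a \v \b \in \{\a,\b\}$, forcing $\a$ and $\b$ to be comparable), so every non-comparable pair lies in exactly one $P_\theta$ and $n(\L) = \sum_{\theta \in \L\setminus J}\card{P_\theta}$. As in Theorem \ref{lower bound} each $P_\theta$ is nonempty, so $n(\L) = (\card{\L}-\card{J}) + \sum_{\theta}(\card{P_\theta}-1)$. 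It therefore suffices to exhibit a single $\theta_0 \in \L\setminus J$ with $\card{P_{\theta_0}} \ge 2$, and I would take $\theta_0 = \hat 1$.

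First I would check that $\hat 1$ has at least two coatoms, i.e. at least two elements covered by $\hat 1$ in the sense of Definition \ref{covers} (in particular $\hat 1$ is then not join irreducible, so $\hat 1 \in \L\setminus J$). If $\hat 1$ had a unique coatom $c$, then every element other than $\hat 1$ would lie below $c$, so $c$ would be comparable to every element of $\L$; as $\L$ is thick and not the two element chain, $c$ is neither $\hat 0$ nor $\hat 1$, contradicting thickness. So fix coatoms $c_1 \ne c_2$; they are non-comparable and $c_1 \v c_2 = \hat 1$, whence $\{c_1,c_2\} \in P_{\hat 1}$.

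The heart of the argument is to produce a second element of $P_{\hat 1}$, and I would argue by contradiction: suppose $\{c_1,c_2\}$ is the only pair in $P_{\hat 1}$. If some $x \notin \{c_1,c_2,\hat 1\}$ satisfied $x \not\le c_1$, then (since the only elements above the coatom $c_1$ are $c_1$ and $\hat 1$) $x$ would be non-comparable to $c_1$ with $x \v c_1 = \hat 1$, giving a pair $\{c_1,x\} \ne \{c_1,c_2\}$ in $P_{\hat 1}$. Hence every element except $c_2$ and $\hat 1$ is $\le c_1$, and symmetrically every element except $c_1$ and $\hat 1$ is $\le c_2$. Writing $m = c_1 \w c_2$, any element outside $\{c_1,c_2,\hat 1\}$ is then $\le c_1$ and $\le c_2$, hence $\le m$. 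Consequently $\L = [\hat 0,m] \cup \{c_1,c_2,\hat 1\}$ and $m$ is comparable to every element of $\L$. Since $m < c_1$ and $m < c_2$ (the coatoms being incomparable), if $m \ne \hat 0$ then $m$ is a non-extremal element comparable to all of $\L$, contradicting thickness; and if $m = \hat 0$ then $\L = \{\hat 0,c_1,c_2,\hat 1\}$ is a diamond, which is excluded. Either way we reach a contradiction, so $\card{P_{\hat 1}} \ge 2$.

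Combining these steps gives $n(\L) \ge (\card{\L}-\card{J}) + (\card{P_{\hat 1}}-1) > \card{\L}-\card{J}$. The step I expect to be the main obstacle is the contradiction in the two coatom case: one must rule out every alternative decomposition $\hat 1 = x \v c_i$ and show that their absence squeezes all non-extremal elements below $m = c_1 \w c_2$, so that $m$ becomes comparable to everything. It is precisely at this squeezing that thickness must be used decisively, and keeping track of the boundary case $m = \hat 0$ (which recovers the excluded diamond) is the delicate part of the bookkeeping.
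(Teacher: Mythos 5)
Your proof is correct, and it takes a genuinely different route from the paper's. The paper argues by induction on $\card{\L}-\card{J}$: the base case $\card{\L}-\card{J}=2$ uses thickness to manufacture one extra diamond, and the inductive step prunes $\L$ at a maximal join irreducible $\a$ and counts the diamonds lost in passing to the pruned sublattice $\L_\a$. You instead make the counting of Theorem \ref{lower bound} exact: the map sending a non-comparable pair to its join partitions the pairs as $n(\L)=\sum_{\theta \in \L\setminus J}\card{P_\theta}$ with every block nonempty, so strictness reduces to exhibiting one $\theta$ with $\card{P_\theta}\geq 2$, and your squeezing argument at $\theta=\hat 1$ (unique coatom contradicts thickness; if $\{c_1,c_2\}$ were the only pair joining to $\hat 1$, then everything outside $\{c_1,c_2,\hat 1\}$ drops below $m=c_1\w c_2$, so $m$ is comparable to all of $\L$, which thickness forbids unless $m=\hat 0$, i.e. unless $\L$ is the excluded diamond) does exactly that. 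Your approach buys two things. First, strictness falls out in one step, whereas the paper's inductive bookkeeping as written ($n(\L_\a)\geq \card{\L_\a}-\card{J}+1$, then $n(\L)\geq n(\L_\a)+\card{\L}-\card{\L_\a}-1$) only returns the non-strict bound $n(\L)\geq\card{\L}-\card{J}$, and moreover the induction hypothesis is applied to $\L_\a$ without verifying that $\L_\a$ is thick and not a diamond (pruning a thick lattice can produce a diamond, e.g. pruning the product of a two-chain and a three-chain); your self-contained argument sidesteps both frictions. Second, you use distributivity nowhere except for the definitional fact that the join of a non-comparable pair is join reducible, so your proof works for arbitrary finite lattices, while the paper's pruning theorem genuinely needs distributivity. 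One caveat, which you only partially flag: as literally stated the lemma fails for the one- and two-element chains, which are vacuously thick and not diamonds yet satisfy $n(\L)=\card{\L}-\card{J}=0$; your coatom step explicitly excludes the two-element chain but you should also exclude $\card{\L}=1$, and this is not a defect relative to the paper, whose opening claim $\card{\L}-\card{J}\geq 2$ silently makes the same exclusion.
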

\begin{proof} Since $\L$ is not a diamond we have $\card{\L \setminus J}=\card{\L} - \card{J} \geq 2$. We will prove the lemma with an induction argument on the number $\card{\L} - \card{J}$. For the base case we have $\card{\L} - \card{J} =2$, let the two elements be $\a_1, \a_2 \in \L \setminus J$. Note that one of these two is the maximal element of the lattice namely $\hat{1}$, without a loss of generality let us assume that $\a_1 = \hat{1}$. And let us assume that $\a_i = \theta_i \v \delta_i$ for $i=1,2$ and $\theta_i \nsim \delta_i$. If both $\theta_1$ and $\delta_1$ are comparable to $\a_2$ then since the lattice $\L$ is thick there must be a $\g \in \L$ such that $\a_2$ and $\g$ are non-comparable. This gives a new diamond relation unless $\{\g, \a_2, \g \v \a_2 , \g \w \a_2\}$ is equal to $\{\theta_1, \delta_1, \theta_1 \w \delta_1 , \a_1\}$ which is not possible since $\a_2$ has been assumed to be comparable to both $\theta_1$ and $\delta_1$. In other case when $\a_2$ is non-comparable to either of $\theta_1$ or $\delta_1$ then we have a new diamond relation associated to that non-comparable pair. So in both the cases we have number of diamond relations $n(\L)$ larger than $\card{\L \setminus J}=2$.

For the general case of the induction let us assume that we have the result for all the lattices with cardinality less than that of $\L$. Let $\a$ be a maximal join irreducible element. Let us consider pruned lattice with respect to this maximal join irreducible (see \cite{HM} for a thorough treatment of pruned lattices ) $\L_{\a}=\L \setminus K_{\a}$ where $K_{\a}= \{ \g \in \L | \g \geq \a \}$. We know that $\L_\a$ is a sublattice of $\L$ with $J \setminus \{\a\}$ as it's set of join irreducibles. Applying the result to this lattice and observing that the size of the set of join irreducibles of the sublattice $\L_\a$ is one less than the size of $J$, we have $n(\L_\a) \ge \card{\L_\a}-\card{J}+1$. Now for each $b \in K_\a$ if $b \neq \a$ then there are $\theta, \delta \in \L$ such that $b = \theta \v \delta$. And note that not both of these are in $\L_\a$ since that will mean $b= \theta \v \delta \in \L_\a$ contradicting our assumption. So the diamond $\{ b , \theta , \delta , \theta \w \delta \}$ is not counted in $n (\L_\a)$. Or $n(\L) - n (\L_\a) \geq \card{K_\a} -1 = \card{\L} - \card{L_\a} -1$, which gives $n(\L) \geq n(\L_\a) + \card{\L} - \card{L_\a} -1$. Applying the induction assumption we have $n(\L) \geq \card{\L_\a} - \card{J} +1 + \card{\L} -\card{\L_\a} -1 = \card{\L} - \card{J}$.
\end{proof}

\begin{thm} \label{equality} For a distributive lattice $\L$ we have $n(\L)= \card{\L} - \card{J}$ if and only if the lattice $\L$ is isomorphic to a concatenation of copies of a diamond and chains of arbitrary lengths.
\end{thm}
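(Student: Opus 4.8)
The plan is to prove both implications by running the decomposition $\L \cong \L_1 \# \L_2 \# \cdots \# \L_k$ into thick factors furnished by Lemma \ref{decomposition}, and then exploiting that both $n(\cdot)$ and the auxiliary quantity $g(\cdot):=\card{\cdot}-\card{J}$ are additive along concatenations. For the additivity of $n$, I would note that in $\L_1\#\L_2$ the glued point is comparable to every element of both factors, so every non-comparable pair lies entirely inside one factor; hence $n(\L_1\#\L_2)=n(\L_1)+n(\L_2)$ and, inductively, $n(\L)=\sum_i n(\L_i)$. For the additivity of $g$ I would use that $\card{\L_1\#\L_2}=\card{\L_1}+\card{\L_2}-1$ (the two endpoints are identified) together with the fact, recorded in the pruned-subset discussion, that $\card{J}$ is realized by a maximal chain; since a maximal chain of $\L_1\#\L_2$ is glued from maximal chains of the two factors sharing only the glue point, the count $\card{J}$ acquires exactly the same $-1$ correction, so the two corrections cancel in $g$ and $g(\L_1\#\L_2)=g(\L_1)+g(\L_2)$, whence $g(\L)=\sum_i g(\L_i)$.

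With these two additivity facts the forward (``if'') implication is short. If $\L$ is a concatenation of diamonds and chains, splitting each chain into single edges produces a thick decomposition whose factors are diamonds, single edges, and single points. On each such factor one checks directly that $n=g$: a point or an edge carries no non-comparable pair and equals its own maximal chain, so $n=g=0$, while a diamond has its unique non-comparable pair and exactly one element off a maximal chain, so $n=g=1$. Summing over the factors via the additivity of $n$ and of $g$ gives $n(\L)=g(\L)=\card{\L}-\card{J}$.

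For the converse I would assume $n(\L)=\card{\L}-\card{J}=g(\L)$ and take the thick decomposition $\L\cong\L_1\#\cdots\#\L_k$. Additivity yields $\sum_i n(\L_i)=n(\L)=g(\L)=\sum_i g(\L_i)$, while the lower bound of Theorem \ref{lower bound} applied to each factor gives $n(\L_i)\ge g(\L_i)$ for every $i$; together these force the term-by-term equalities $n(\L_i)=g(\L_i)$. Now I invoke Lemma \ref{forward}: a thick factor that is not a diamond and that carries a non-comparable pair must satisfy the strict inequality $n>g$, which the equality $n(\L_i)=g(\L_i)$ excludes. The only thick factors surviving are therefore diamonds and the factors with no non-comparable pair, i.e. single points and single edges, which are precisely (trivial) chains. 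Reassembling the $\L_i$ then exhibits $\L$ as a concatenation of diamonds and chains.

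The step I expect to be the main obstacle is the honest bookkeeping around $\card{J}$ under concatenation and the degenerate thick factors. Establishing $\card{J(\L_1\#\L_2)}=\card{J_1}+\card{J_2}-1$ must use the correct convention in which $\card{J}$ counts the elements of a maximal chain (so that the glue point is shared once, matching the $-1$ in $\card{\L}$); under this convention the lower bound $n\ge\card{\L}-\card{J}$ is consistent and $g$ is genuinely additive. Equally delicate is that Lemma \ref{forward}, although stated for thick non-diamonds, implicitly works from $\card{\L\setminus J}=\card{\L}-\card{J}\ge 2$, which already sets aside the degenerate thick factors (points and edges, where $\card{\L}-\card{J}=0$); these are exactly the chain pieces, so they must be reincorporated into the conclusion rather than treated as counterexamples. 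Making this dichotomy explicit — a non-diamond thick factor possessing a non-comparable pair forces $n>g$, whereas a thick factor without any non-comparable pair is a chain — is where the argument requires the most care.
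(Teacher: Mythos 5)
Your proposal is correct and takes essentially the same route as the paper's own proof: decompose $\L$ into thick factors via Lemma \ref{decomposition}, use additivity of $n$ under concatenation, and invoke Lemma \ref{forward} to force each non-degenerate thick factor to be a diamond. You simply make explicit two points the paper's terse proof glosses over, namely the additivity of $\card{\L}-\card{J}$ (under the paper's convention that $\card{J}$ equals the number of elements of a maximal chain) and the handling of the degenerate thick factors (points and edges), which are exactly the chain pieces.
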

\begin{proof}From lemma \ref{decomposition} we know that every distributive lattice $\L$ is decomposable into a concatenation of sublattices which are thick so we will assume that the lattice $\L$ is thick. For we see clearly that $n(\L_1 \# \L_2)= n (\L_1) + n ( \L_2)$. Now by the lemma \ref{forward} we know that $\L$ has to be a diamond.
\end{proof}

\begin{thm} \label{complete intersection} The affine variety $X(\L)$ is a complete intersection if and only if the lattice $\L$ is a concatenation of diamonds and chains.
\end{thm}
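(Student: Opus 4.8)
The plan is to translate the geometric statement into the single combinatorial equality $n(\L) = \card{\L} - \card{J}$ and then quote Theorem \ref{equality}. The two facts that make this translation possible are already available: the dimension of $X(\L)$ and the optimal generation count for $I(\L)$. First I would record the codimension. By the result of \cite{glnext} recalled in the introduction, $\dim X(\L)$ equals the rank of $\L$; and since $\L$ is distributive, every maximal chain has length $\card{J}$ (the fact already used in Lemma \ref{approx f(d)}). Hence $\dim X(\L) = \card{J}$, so that $X(\L) \subset \mathbb{A}^{\card{\L}}$ has codimension exactly $\card{\L} - \card{J}$.

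Next I would invoke the optimal generation theorem proved above: the ideal $I(\L)$ is minimally generated by $n(\L)$ diamond relations. Consequently $I(\L)$ admits a generating set of cardinality $r$ precisely when $r \geq n(\L)$. By the counting definition of complete intersection recalled before the statement, $X(\L)$ is a complete intersection exactly when $I(\L)$ can be defined by as many polynomials as its codimension, i.e. by $\card{\L} - \card{J}$ elements; by the previous sentence this is possible if and only if $n(\L) \leq \card{\L} - \card{J}$. On the other hand, Theorem \ref{lower bound} furnishes the reverse inequality $n(\L) \geq \card{\L} - \card{J}$ for every distributive lattice. Combining the two, $X(\L)$ is a complete intersection if and only if $n(\L) = \card{\L} - \card{J}$.

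Finally I would apply Theorem \ref{equality}, which characterises the equality $n(\L) = \card{\L} - \card{J}$ as holding exactly when $\L$ is isomorphic to a concatenation of copies of a diamond and chains of arbitrary length, that is, a concatenation of diamonds and chains. This yields the stated characterisation, and apart from the two quoted theorems no further computation is needed.

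The delicate point is the middle step: matching the paper's counting definition of complete intersection with the \emph{minimal number of ideal generators}. One must read ``cut out by codimension-many polynomials'' ideal-theoretically (equivalently scheme-theoretically, as the cited references \cite{Ha,eis} intend), so that the genuine lower bound on the number of defining equations is $n(\L)$ rather than some possibly smaller set-theoretic defining number. The optimal generation theorem is exactly what licenses using $n(\L)$ here, and it also supplies the converse: when $n(\L) = \card{\L} - \card{J}$ the $n(\L)$ minimal generators form a generating set whose size equals the height of the prime ideal $I(\L)$ in the Cohen--Macaulay ring $k[\L]$, so they are a regular sequence and $X(\L)$ is a complete intersection in the honest sense. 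Granting this reading, the whole argument is a formal consequence of the inequalities in Theorems \ref{lower bound} and \ref{equality}.
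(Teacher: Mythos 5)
Your proposal is correct and follows essentially the same route as the paper: both reduce the statement to the combinatorial equality $n(\L)=\card{\L}-\card{J}$, using $\dim X(\L)=\card{J}$ together with the fact that $I(\L)$ is minimally generated by the $n(\L)$ diamond relations, and then quote Theorem \ref{equality}. Your write-up merely makes explicit what the paper leaves terse --- invoking Theorem \ref{lower bound} to turn $n(\L)\leq\card{\L}-\card{J}$ into an equality, and pinning down the ideal-theoretic reading of ``complete intersection'' with the regular-sequence remark --- which is welcome precision rather than a different argument.
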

\begin{proof} We know the dimension of the variety $X(\L)$ is $\card{J}$, so the minimal number of generators for the ideal $I(\L)$ i.e $n(\L)$ is equal to the codimension $\card{\L} - \card{J}$ if and only if $\L$ is concatenation of chains and diamonds by the theorem \ref{equality}. So we have the result.
\end{proof}

Let us define a notation for the number of covers (or equivalently edges in the Hasse graph ) in the lattice $\L$ as $e(\L)$. With this notation in mind we have the following theorem.

\begin{thm} \label{edge approx} $n(\L) \geq e(\L) - \card{\L} +1$.
\end{thm}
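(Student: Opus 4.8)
The plan is to prove the bound by induction on $\card{\L}$, pruning a maximal join irreducible exactly as in the proof of Lemma \ref{forward}, while tracking $n$, $e$ and cardinality simultaneously. Fix a maximal join irreducible $\a$ and, as before, set $K_\a=\{\g\in\L:\g\ge\a\}$ and $\L_\a=\L\setminus K_\a$. Since $K_\a$ is an up-set (so $\L_\a$ is a down-set), the covers internal to either piece coincide with covers of $\L$, and therefore $e(\L)=e(\L_\a)+e(K_\a)+c$, where $c$ counts the covers joining $\L_\a$ to $K_\a$. The first step is to evaluate $c$. If $x\lessdot y$ with $x\in\L_\a$ and $y\in K_\a$, then $\a\in I_y\setminus I_x$, and since the cover adds a single join irreducible (Lemma \ref{join lemma}) we get $I_y=I_x\cup\{\a\}$; as $\a$ is maximal in $J$, the assignment $y\mapsto y^\flat$, where $y^\flat$ is the element with $I_{y^\flat}=I_y\setminus\{\a\}$, is a bijection from $K_\a$ onto the set of cross covers. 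Hence $c=\card{K_\a}$, and combined with $\card{\L}=\card{\L_\a}+\card{K_\a}$ this yields the bookkeeping identity $e(\L)-\card{\L}+1=\bigl(e(\L_\a)-\card{\L_\a}+1\bigr)+e(K_\a)$.

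The heart of the argument is the lower bound $n(\L)-n(\L_\a)\ge e(K_\a)$. As in Lemma \ref{forward}, a diamond of $\L$ is a diamond of the sublattice $\L_\a$ exactly when all four of its elements lie in $\L_\a$, and by Lemma \ref{join prime} this fails precisely when its top lies in $K_\a$; so $n(\L)-n(\L_\a)$ is the number of diamonds of $\L$ with top in $K_\a$. I would inject the covers of $K_\a$ into this family by sending a cover $u\lessdot v$ of $K_\a$ to the non-comparable pair $(u,v^\flat)$. Writing $p$ for the join irreducible added along this cover (so $I_v=I_u\cup\{p\}$ with $p\notin I_u$, and $p\ne\a$ since $\a\in I_u$), we have $I_{v^\flat}=(I_u\setminus\{\a\})\cup\{p\}$. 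Then $u\not\le v^\flat$ because $u\ge\a$ while $\a\notin I_{v^\flat}$, and $v^\flat\not\le u$ because $p\in I_{v^\flat}\setminus I_u$; thus $u\nsim v^\flat$. Moreover $I_u\cup I_{v^\flat}=I_v$, so $u\v v^\flat=v\in K_\a$, confirming that the pair really does give a diamond with top in $K_\a$. This assignment is injective: from the diamond one recovers $v$ as its top and $u$ as the unique one of the two incomparable middle elements lying above $\a$, hence the cover $u\lessdot v$ itself. Therefore $n(\L)-n(\L_\a)\ge e(K_\a)$.

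Finally I would assemble the pieces. The base case $\card{\L}=1$ is immediate, both sides being $0$. For the inductive step $\card{\L_\a}<\card{\L}$, so the inductive hypothesis gives $n(\L_\a)\ge e(\L_\a)-\card{\L_\a}+1$; adding the inequality of the previous paragraph and invoking the bookkeeping identity gives $n(\L)\ge\bigl(e(\L_\a)-\card{\L_\a}+1\bigr)+e(K_\a)=e(\L)-\card{\L}+1$, which closes the induction. I expect the injection to be the main obstacle: the points needing genuine care are that $(u,v^\flat)$ is truly non-comparable with join exactly $v$, and that distinct covers of $K_\a$ cannot collide to the same diamond, both of which rely on the maximality of $\a$ in $J$ and on the description $I_{v^\flat}=I_v\setminus\{\a\}$. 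A secondary point is the exact cross-cover count $c=\card{K_\a}$, since the concluding equality of the two bookkeeping quantities depends on it precisely, not merely up to an inequality.
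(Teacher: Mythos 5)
Your proof is correct, but it follows a genuinely different route from the paper's. The paper argues in one global step: fix a spanning tree $T$ of the Hasse graph, so that exactly $e(\L)-\card{\L}+1$ cover edges lie off $T$, and send each off-tree edge $\theta\gtrdot\delta$ to the diamond $\{\theta,\delta,\a,\delta\w\a\}$ furnished by Lemma \ref{join lemma}, where $\a$ is the unique join irreducible with $\theta=\delta\v\a$; the bound follows once the assignment is shown injective. You instead induct on $\card{\L}$, pruning a maximal join irreducible as in Lemma \ref{forward}, and all three of your ingredients check out: the cross-cover count $c=\card{K_\a}$ via $y\mapsto y^\flat$ is valid because maximality of $\a$ in $J$ makes $I_y\setminus\{\a\}$ again a down-set of $J$, hence an element of $\L$; the pair $(u,v^\flat)$ is indeed non-comparable with join exactly $v\in K_\a$, hence new relative to $n(\L_\a)$, since by Lemma \ref{join prime} the diamonds lost in passing to $\L_\a$ are precisely those with top in $K_\a$; and injectivity holds since $u$ is the unique member of the pair lying above $\a$ and $v=u\v v^\flat$ is recovered as the join. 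As for what each approach buys: the paper's proof is shorter and more direct, but as printed it is incomplete --- the injectivity argument breaks off mid-sentence --- and it has an unaddressed degenerate case when the top $\theta$ of an off-tree edge is itself join irreducible, for then $\a=\theta$ and the purported diamond $\{\theta,\delta,\a,\delta\w\a\}$ collapses to the edge itself (this is fixable by choosing $T$ to contain the unique lower cover edge of each join irreducible, since those edges form a forest). Your inductive argument sidesteps this issue entirely, because your covers live inside $K_\a$ and the pair you produce is separated by $\a$ itself; it also delivers the sharper local estimate $n(\L)-n(\L_\a)\ge e(K_\a)$ along the way. One small correction to your closing remark: the final assembly only needs the inequality $c\le\card{K_\a}$, which follows from uniqueness of the bottom $y^\flat$ for each top $y\in K_\a$, so the exact bijection, while true and cleanly proved, is not actually indispensable.
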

\begin{proof} Let us take a spanning tree $T$ in the Hasse graph of the lattice $\L$. There are $\card{\L} -1$ edges in the tree $T$. Let  $A$ be the collection of edges in $\L$ $(\theta, \delta)$ which are not on the spanning tree $T$. Since $(\theta, \delta) \in A$ is an edge we have $\theta \gtrdot \delta $. So by \ref{covers} we have a unique join-irreducible $\a$ such that $\theta = \delta \v \a $. So we have a diamond $\{\theta , \delta , \delta \w \a , \a \}$. We have different such diamonds for different edges $(\theta, \delta)$. Since otherwise if $(\theta, \delta )$ and $(x,y)$ gives rise to same diamond $\{\theta , \delta , \a , \delta \w \a \}$
\end{proof}

Here let us also include a conjectural lower bound for the numbers $n(\L)$.
\begin{conj} For thick distributive lattices $\L$ we have $n(\L) \geq e (\L) - \card{J}$, where $J$ is the poset of join irreducibles of $\L$.
\end{conj}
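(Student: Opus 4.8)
The plan is to obtain the required collection of $e(\L)-\card{J}$ pairwise distinct diamonds by \emph{merging} the two lower bounds already proved in this section and then controlling how much they overlap. Theorem \ref{lower bound} attaches to each join-reducible element $\te$ one diamond, by choosing a non-comparable pair $(\a_1,\a_2)$ with $\a_1\v\a_2=\te$; call the resulting set of diamonds $\A$, so $\card{\A}=\card{\L}-\card{J}$. Theorem \ref{edge approx} fixes a spanning tree of the Hasse graph and attaches one diamond to each of the $e(\L)-\card{\L}+1$ non-tree edges; call this set $\C$. The arithmetic that drives the whole approach is
\[
\card{\A}+\card{\C}=(\card{\L}-\card{J})+(e(\L)-\card{\L}+1)=e(\L)-\card{J}+1 ,
\]
so that $n(\L)\ge\card{\A\cup\C}\ge e(\L)-\card{J}$ \emph{provided $\A$ and $\C$ can be arranged to share at most one diamond}. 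Thus the conjecture reduces to a single clean statement: for thick $\L$ the two families meet in at most one diamond.

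The next step is to stop treating the two families as independent and instead coordinate the free choices inside them. Both constructions rest on arbitrary selections — the incomparable pair realising each reducible top in $\A$, and the spanning tree defining $\C$ — and these choices genuinely matter: already in small thick examples a careless pairing forces the overlap up to two and breaks the count, so the selections must be made in concert. I would organise both families by the \emph{top} element of each diamond, using Birkhoff's order-ideal representation of $\L$ together with the edge-labelling of Lemma \ref{join lemma}, which sends a cover $\te\gtrdot\d$ to the unique join irreducible $\b$ with $\te=\d\v\b$ and hence to the diamond with top $\te$ and non-comparable pair $(\d,\b)$. Since the $\A$-diamonds also have their reducible elements as tops, a coincidence between $\A$ and $\C$ forces an agreement both of tops and of the realising pair, and the hope is that thickness leaves room to dodge all but one such agreement.

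The role of thickness is the heart of the matter and the step I expect to be the main obstacle. That the hypothesis cannot be dropped is visible from the concatenation $\L=D\#D$ of two diamonds, where $n(\L)=2$ while $e(\L)-\card{J}=3$; here $\A$ and $\C$ are \emph{forced} to coincide completely, because the cut-point of the concatenation leaves only two non-comparable pairs in the entire lattice. The underlying mechanism is additivity: $e$ and $n$ are both additive under $\#$, whereas $\card{J(\L_1\#\L_2)}=\card{J(\L_1)}+\card{J(\L_2)}-1$, so $e-\card{J}$ picks up a spurious $+1$ at every cut-point that $n$ never compensates. Any correct argument must therefore use thickness \emph{precisely} to forbid overlap of size two, while conceding that overlap of size one is unavoidable — the single diamond, where $\A=\C$ is one diamond and the bound is exactly tight, shows this.

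As a fall-back route more in the style of Lemma \ref{forward}, I would induct by pruning a maximal join irreducible $\a$ and comparing $\L$ with $\L_\a=\L\setminus K_\a$, tracking $n$, $e$ and $\card{J}$ at once: bound $n(\L)-n(\L_\a)$ below by the new diamonds with top in $K_\a$ and $e(\L)-e(\L_\a)$ above by the covers incident to $K_\a$. The reason this is still only a conjecture is that $\L_\a$ need not be thick, so the inductive hypothesis does not apply to it; one must first split $\L_\a$ into thick blocks through Lemma \ref{decomposition}, at which point the per-cut-point $+1$ identified above re-enters and must be recovered from the very covers destroyed by the pruning. Carrying out this accounting so that it stays non-negative in complete generality is exactly the crux, and is where I expect both the genuine difficulty and the residual risk to lie.
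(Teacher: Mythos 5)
There is no proof in the paper to compare against: the statement is posed there as an open \emph{conjecture}, and your proposal, as you yourself concede in the final sentence, does not close it. What you have is a correct and useful reformulation, not a proof. The bookkeeping is sound under the paper's convention that $\hat{0}\in J$ (which is forced, since Theorem \ref{lower bound} holds with equality for a single diamond only if $\card{J}$ counts the elements of a maximal chain): $n$ and $e$ are additive under $\#$ while $\card{J}$ loses one per cut-point, so your $D\# D$ computation $n=2<3=e(\L)-\card{J}$ correctly shows thickness cannot be dropped, and the identity $\card{\A}+\card{\C}=e(\L)-\card{J}+1$ does reduce the conjecture to the claim that for thick $\L$ the free choices (the witness pair for each join-reducible top in $\A$, the spanning tree for $\C$) can be coordinated so that $\card{\A\cap\C}\le 1$. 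But that claim is precisely where all the mathematical content sits, and your sketch supplies no mechanism for it: thickness is a local condition (each interior element has \emph{some} incomparable partner), whereas bounding the overlap requires a global, simultaneous selection over all reducible elements and all non-tree edges. You neither exhibit a selection rule nor prove one exists, and your fall-back induction is blocked for exactly the reason you identify --- $\L_\a$ need not be thick, and re-splitting via Lemma \ref{decomposition} re-imports the per-cut-point $+1$ defect. So the central gap is genuine and unbridged.

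A secondary but real problem is that the family $\C$ is shakier than you treat it. The paper's own proof of Theorem \ref{edge approx} is incomplete (the distinctness verification breaks off mid-sentence), and distinctness is in fact \emph{false} for an arbitrary spanning tree: in the Boolean lattice on three atoms the sibling covers $ab\gtrdot a$ and $ab\gtrdot b$ carry labels $b$ and $a$ respectively and produce the same diamond $\{ab,a,b,\hat{0}\}$, and a spanning tree reaching $ab$ only through $\hat{1}$ leaves both edges outside the tree, so $\card{\C}$ falls below $e(\L)-\card{\L}+1$. Separately, for a non-tree cover $(\theta,\delta)$ whose top $\theta$ is itself join irreducible, the unique $\b$ with $\theta=\delta\v\b$ is $\theta$, and no diamond arises at all; one must first show the tree can be chosen to contain every cover whose top is join irreducible (true, since each join irreducible tops exactly one cover and hence these edges form a forest --- the maximal vertex of any cycle would have to top two of them --- but this has to be argued, and neither you nor the paper does so). Thus the union bound you start from already requires a carefully constrained spanning tree, and those constraints must then be made compatible with the overlap condition $\card{\A\cap\C}\le 1$. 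Until you produce one selection rule achieving both simultaneously, with a proof that thickness makes it possible, your proposal is a plausible programme for attacking the conjecture rather than a proof of it.
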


%%%%%%%%%%%%%%%%%%%%%%%%%%%%%%%%%%%%%%%%%%%%%%%%%%%%%%%%%%%%%%%%%%%%%%%%%%%%%%%%%%%%%%%%%%%%%%%%%%%%%%%%%%%%%%%%%%%%%%%%%%%%%%%%%%%%%%%%%%%%%%%%%%%%

\section{An Upper Bound}

%In this section we will find several upper bounds for the number $n(\L)$ depending on various conditions on the lattice $\L$.

It is a natural question now to ask for an upper bound for the number $n(\L)$. In this section we will deal with this question. In the following theorem we write down a bound as immediate consequence of the lemma \ref{approx f(d)}.
\begin{thm}\label{first approximation} $n(\mathcal{L}) \leq (\card{\mathcal{L}}-\card{J})\card{\mathcal{L}}/2$.
\end{thm}
\begin{proof} From lemma \ref{equation} we have $n(\mathcal{L})=(\card{\mathcal{L}}^2 -\displaystyle{\sum_{\d \in \L} f(\d)})/2$. And from lemma \ref{approx f(d)} we have $\displaystyle{\sum_{\d \in \L} f(\d) \geq \card{\L}\card{J}}$. Putting these two together we have \[n(\L) \leq (\card{\L}^2 -\card{\L}\card{J})/2.\] Or \[n(\L) \leq \card{\L}(\card{\L} -\card{J})/2.\]
\end{proof}

%%%%%%%%%%%%%%%%%%%%%%%%%%%%%%%%%%%%%%%%%%%%%%%%%%%%%%%%%%%%%%%%%%%%%%%%%%%%%%%%%%%%%%%%%%%%%%%%%%%%%%%%%%%%%%%%%%%%%%%%%%%%%%%%%%%%%%%%

%\section{Proof of the Main theorems}

\bibliographystyle{abbrv}
%\bibliography{main}

\end{document}